\setlist[enumerate]{leftmargin=*}
\setlist[itemize]{leftmargin=*}
\newtheorem{theorem}{Theorem}[section]
\newaliascnt{lemma}{theorem}
\newaliascnt{corollary}{theorem}
\newaliascnt{definition}{theorem}
\newaliascnt{remark}{theorem}
\newaliascnt{proposition}{theorem}
\newaliascnt{conjecture}{theorem}
\newaliascnt{example}{theorem}
\newaliascnt{prob}{theorem}
\newaliascnt{question}{theorem}
\newaliascnt{claim}{theorem}
\newtheorem{lemma}[lemma]{Lemma}
\newtheorem*{lemma*}{Lemma}
\newtheorem*{corollary*}{Corollary}
\newtheorem{definition}[definition]{Definition}
\newtheorem*{definition*}{Definition}
\newtheorem{remark}[remark]{Remark}
\newtheorem*{remark*}{Remark}
\newtheorem{proposition}[proposition]{Proposition}
\newtheorem*{proposition*}{Proposition}
\newtheorem{conjecture}[conjecture]{Conjecture}
\newtheorem*{conjecture*}{Conjecture}
\newtheorem*{example*}{Example}
\newtheorem{prob}[prob]{Problem}
\newtheorem*{prob*}{Problem}
\newtheorem{question}[question]{Question}
\newtheorem*{question*}{Question}
\newtheorem*{claim*}{Claim}
\DeclareMathOperator{\supp}{supp}
\DeclareMathOperator{\conv}{conv}
\DeclareMathOperator{\vol}{vol}
\DeclareMathOperator{\pyr}{pyr}
\DeclareMathOperator{\h}{h}
\DeclareMathOperator{\hs}{h_s}
\newcommand\hsd[2]{\operatorname{h_s^{#1}}(#2)}
\def\R{\mathbb{R}}
\def\Z{\mathbb{Z}}
\def\cD{\mathcal{D}}
\def\cE{\mathcal{E}}
\def\cF{\mathcal{F}}
\def\cO{\mathcal{O}}
\def\cP{\mathcal{P}}
\def\fc{\mathfrak{c}}
\DeclarePairedDelimiter{\card}{\lvert}{\rvert}
\newcommand{\one}{\mathbf{1}}
\newcommand{\zero}{\mathbf{0}}
\newcommand{\sfP}{\mathsf{P}}
\begin{document}

\title[On the Maximal Number of Columns of a {\textDelta}-modular Integer Matrix]{On the Maximal Number of Columns of a {\textDelta}-modular Integer Matrix: Bounds and Computations}

\author{Gennadiy Averkov}
\address{BTU Cottbus-Senftenberg\\
  Platz der Deutschen Einheit 1\\
  03046 Cottbus\\
  Germany}
\email{averkov@b-tu.de}

\author{Matthias Schymura}
\address{BTU Cottbus-Senftenberg\\
  Platz der Deutschen Einheit 1\\
  03046 Cottbus\\
  Germany}
\email{schymura@b-tu.de}

\thanks{An extended abstract of this work appeared as~\cite{averkovschymura2022maximal-ipco} at IPCO 2022.}



\date{July 10, 2022}

\begin{abstract}
We study the maximal number of pairwise distinct columns in a $\Delta$-modular integer matrix with~$m$ rows.
Recent results by Lee et al.~provide an asymptotically tight upper bound of $\cO(m^2)$ for fixed~$\Delta$.
We complement this and obtain an upper bound of the form $\cO(\Delta)$ for fixed~$m$, and with the implied constant depending polynomially on~$m$.
\end{abstract}

\maketitle

\section{Introduction}

Full row-rank integer matrices with minors bounded  by a given constant $\Delta$ in the absolute value have been extensively studied in integer linear programming as well as matroid theory:
The interest for optimization was coined by the paper of Artmann, Weismantel \& Zenklusen~\cite{artmannweismantelzenklusen2017astrongly} who showed that integer linear programs with a \emph{bimodular} constraint matrix, meaning that all its maximal size minors are bounded by two in absolute value, can be solved in strongly polynomial time.
With the goal of generalizing the results of Artmann et al.~beyond the bimodular case, N\"agele, Santiago \& Zenklusen~\cite{naegelesantiagozenklusen2021congruency} studied feasibility and proximity questions of a subclass of integer programs with bounded subdeterminants.
Fiorini et al.~\cite{fiorinijoretweltgeyuditsky2021integer} obtained a strongly polynomial-time algorithm for integer linear programs whose defining coefficient matrix has the property that all its subdeterminants are bounded by a constant and all of its rows contain at most two nonzero entries.
For more information on the development regarding this topic, we refer to the three cited contributions above and the references therein. 

For a matrix $A \in \R^{m \times n}$ and for $1 \leq k \leq \min\{m,n\}$, we write
\[
\Delta_k(A) := \max\{ \card{\det(B)} : B \textrm{ is a } k \times k \textrm{ submatrix of } A \}
\]
for the maximal absolute value of a $k \times k$ minor of~$A$.
Given an integer $\Delta \in \Z_{>0}$, a matrix $A \in \R^{m \times n}$ of rank~$m$ is said to be \emph{$\Delta$-modular} and \emph{$\Delta$-submodular}, if $\Delta_m(A) = \Delta$ and~$\Delta_m(A) \leq \Delta$, respectively.\footnote{The authors of~\cite{glanzerweismantelzenklusen2018onthenumber,leepaatstallknechtxu2021polynomial}  use the term $\Delta$-modular for what we call $\Delta$-submodular.}
Moreover, a matrix $A \in \R^{m \times n}$ is said to be \emph{totally $\Delta$-modular} and \emph{totally $\Delta$-submodular}, if $\max_{k \in [m]} \Delta_k(A) = \Delta$ and $\max_{k \in [m]} \Delta_k(A) \leq \Delta$, respectively, where $[m] := \{1,2,\ldots,m\}$.

\noindent Our object of studies is the \emph{generalized Heller constant}, which we define as
\begin{align*}
	\h(\Delta,m) := \max\bigl\{ n \in \Z_{>0} :  A \in \Z^{m \times n} \ & \textrm{has  pairwise distinct columns} \bigr.  \\
	\bigl. & \text{and} \ \Delta_m(A) = \Delta \bigr\}.
\end{align*}
The value $\h(\Delta,m)$ is directly related to the value $\fc(\Delta,m)$ studied in~\cite{glanzerweismantelzenklusen2018onthenumber,leepaatstallknechtxu2021polynomial} and defined as the maximum number~$n$ of columns in a $\Delta$-submodular integer matrix~$A$ with~$m$ rows with the properties that~$A$ has no zero columns and for any two distinct columns $A_i$ and~$A_j$ with $1 \le i <j \le n$ one has $A_i \ne A_j$ and $A_i \ne - A_j$.
It is clear that 
\[
	\fc(\Delta,m) = \frac{1}{2} \bigl( \max \{\h(1,m),\ldots, \h(\Delta,m)\} -1 \bigr),
\]
showing that $\fc(\Delta,m)$ and $\h(\Delta,m)$ are ``equivalent'' in many respects.
However, our proofs are more naturally phrased in terms of $\h(\Delta,m)$ rather than $\fc(\Delta,m)$, as we prefer to prescribe $\Delta_m(A)$ rather than providing an upper bound on $\Delta_m(A)$ and we do not want to eliminate the potential symmetries within $A$ coming from taking columns $A_i$ and $A_j$ that satisfy $A_i  = -A_j$. 

Upper bounds on the number of columns in (totally) $\Delta$-(sub)modular integer matrices with~$m$ rows have been gradually improved over time.
In the case $\Delta = 1$, we are concerned with the notion of (totally) unimodular integer matrices.
The maximal number of pairwise distinct columns in a (totally) unimodular integer matrix  with~$m$ rows has been shown by Heller~\cite{heller1957onlinear} to be equal to $\h(1,m) = m^2 + m + 1$.
Lee~\cite[Sect.~10]{lee1989subspaces} initiated the study of the maximal number of columns beyond unimodular matrices, in 1989, and proved a bound of order $\cO(r^{2\Delta})$, for totally $\Delta$-submodular integer matrices of row-rank~$r$.
Glanzer, Weismantel \& Zenklusen~\cite{glanzerweismantelzenklusen2018onthenumber} revived the story by extending the investigation to $\Delta$-submodular integer matrices and obtaining a polynomial bound in the parameter~$m$.
More precisely, they showed that for each fixed $\Delta \geq 2$, $\h(\Delta,m)$ is of order at most $\cO(\Delta^{2 + \log_2 \log_2 \Delta} \cdot m^2)$.
This result has been recently improved by Lee, Paat, Stallknecht \& Xu~\cite[Thm.~2 \& Prop.~1 \& Prop.~2]{leepaatstallknechtxu2021polynomial} who obtained the exact value
\begin{align}
\h(\Delta,m) = m^2 + m + 1 + 2m(\Delta - 1) \qquad \textrm{if} \qquad \Delta \leq 2 \ \textrm{ or }\ m \leq 2,\label{eqn:lee-et-at-exact}
\end{align}
and, for every $\Delta,m \in \Z_{\geq 3}$, proved the estimates\footnote{Lee et al.~\cite[p.~23]{leepaatstallknechtxu2021polynomial} remark that their techniques provide $\h(\Delta,m) \leq \cO(m^2 \cdot \Delta^{1.95})$.}
\begin{align}
m^2 + m + 1 + 2m(\Delta - 1) \leq \h(\Delta,m) \leq (m^2 + m)\Delta^2 + 1.\label{eqn:lee-et-al-bound}
\end{align}
Bounds on $\h(\Delta,m)$ can also be derived using the machinery of matroid theory.
In their recent work, Geelen, Nelson \& Walsh~\cite[Prop. 8.6.1]{geelennelsonwalsh2021excludingaline} rely on the fact that the class of matroids representable by integer $\Delta$-submodular matrices is minor-closed and that the line on $2 \Delta +2$ points (that is, the uniform matroid of rank two with $2 \Delta +2$ elements) is an excluded minor for $\Delta$-submodular representability.
This shows that $\h(\Delta,m)$ can be bounded by providing a bound, for given positive integers $t$ and $m$, on the size of a simple matroid of rank~$m$ that is representable over real numbers with no $(t+2)$-point line being a minor.
Employing this approach, in~\cite[Thm.~2.2.4]{geelennelsonwalsh2021excludingaline} the bound $\h(\Delta,m) \leq m^2 + f(\Delta) m$ is derived with $f(\Delta)$ being at least double exponential in~$\Delta$ (see the comment in~\cite[p.~3]{leepaatstallknechtxu2021polynomial}). 

The best known upper bounds on $\h(\Delta,m)$ to date have the form of a quadratic polynomial $a(\Delta) m^2 + b(\Delta) m + c$ in $m$, with the coefficients for~$m^2$ and~$m$ possibly depending on~$\Delta$, and the constant term $c \in \R$ being independent of~$\Delta$.
The bounds are incomparable, since for $\Delta \to \infty$, for some results $a(\Delta)$ is large but $b(\Delta)$ is small, while for other bounds it is the other way around.

The lower bound $\h(\Delta,m) \geq m^2 + m + 1 + 2m (\Delta - 1)$ in~\eqref{eqn:lee-et-al-bound} is obtained from the $\Delta$-modular integer matrix with~$m$ rows and whose columns are the elements of the difference set of
\[
\left\{ \zero,e_1,e_2,\ldots,e_m \right\} \cup \left\{ 2e_1,3e_1,\ldots,\Delta e_1\right\},
\]
where~$e_i$ denotes the $i$th coordinate unit vector.
This is a natural generalization of the unimodular matrix ($\Delta = 1$) that attains Heller's result $\h(1,m) = m^2+m+1$.
With this perspective and their precise result~\eqref{eqn:lee-et-at-exact}, for $\Delta \leq 2$ or $m \leq 2$, Lee et al.~\cite{leepaatstallknechtxu2021polynomial} conjecture that the lower bound in~\eqref{eqn:lee-et-al-bound} is actually the correct value of $\h(\Delta,m)$, for any choice of~$\Delta,m \in \Z_{>0}$.

\begin{conjecture}[Lee et al.~\cite{leepaatstallknechtxu2021polynomial}]
\label{conj:lee-exact-value-hDm}
For every $\Delta,m \in \Z_{>0}$, holds
\[
\h(\Delta,m) = m^2 + m + 1 + 2m (\Delta - 1).
\]
\end{conjecture}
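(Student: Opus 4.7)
The plan is to proceed by induction on the modulus~$\Delta$, with the base case $\Delta=1$ being Heller's classical result $\h(1,m)=m^2+m+1$. For the inductive step at $\Delta\ge 2$, the strategy reduces to establishing the following \emph{peeling lemma}: every $\Delta$-modular integer matrix $A\in\Z^{m\times n}$ with pairwise distinct columns admits a subset~$T$ of at most~$2m$ of its columns such that the residual submatrix $A\setminus T$ is $(\Delta-1)$-submodular. Granted this, and using that the conjectured bound $m^2+m+1+2m(k-1)$ is monotone in~$k$, one obtains
\[
n \le |T|+\max_{1\le k\le\Delta-1}\h(k,m) \le 2m+\bigl(m^2+m+1+2m(\Delta-2)\bigr) = m^2+m+1+2m(\Delta-1),
\]
where the middle inequality is the inductive hypothesis.

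To prove the peeling lemma, I would first normalize~$A$ by a $\GL_m(\Z)$ row operation so that some $m\times m$ submatrix $B$ of~$A$ witnessing $|\det B|=\Delta$ is in Hermite normal form with diagonal entries $d_1,\ldots,d_m$ satisfying $d_1\cdots d_m=\Delta$. The key structural claim, suggested by Lee et al.'s extremal construction $S-S$ with $S=\{\zero,e_1,\ldots,e_m,2e_1,\ldots,\Delta e_1\}$, is that up to unimodular equivalence only one of the $d_i$ is nontrivial (say $d_m=\Delta$) while the others equal~$1$. Given this, every column of $A$ can be written as $B\cdot x$ with $x\in\tfrac{1}{\Delta}\Z^m\cap[-1,1]^m$, and the columns responsible for $\Delta_m(A)=\Delta$ are precisely those of the shape $\pm\Delta e_m$ or $\pm(\Delta e_m-e_j)$ for $j<m$. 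Taking $T$ to be this set of $2m$ columns then drops the maximum minor of $A\setminus T$ to at most $\Delta-1$, as required.

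The central obstacle is establishing the rigidity that $d_m=\Delta$ is the \emph{unique} nontrivial diagonal entry, i.e., that an extremal $\Delta$-modular matrix admits essentially one elongation direction. As a piece of evidence, in the smallest open case $m=3,\Delta=3$ one may verify that simultaneously including both $\pm 2e_1,\pm 3e_1$ and $\pm 2e_2,\pm 3e_2$ among the columns forces the $3\times 3$ minor $|\det(2e_1-2e_2,\,2e_1,\,e_3)|=4>\Delta$, violating $\Delta$-modularity. Turning such ad-hoc computations into a general proof requires (i)~a systematic enumeration of the admissible Hermite forms of~$B$; (ii)~an exchange-style argument ruling out any second elongated direction regardless of the presentation of~$A$; and (iii)~handling of non-axis-aligned elongations that arise after generic $\GL_m(\Z)$ transformations. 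A plausible route is to couple the $\Delta$-induction with a secondary induction on~$m$, and to organize the case analysis around the excluded-minor framework of~\cite{geelennelsonwalsh2021excludingaline}. Nevertheless, controlling the interaction between multiple candidate elongation directions is what I anticipate to be the principal technical difficulty, and is likely where essentially new combinatorial input beyond~\cite{leepaatstallknechtxu2021polynomial} and~\cite{geelennelsonwalsh2021excludingaline} will be needed.
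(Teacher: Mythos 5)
There is a fundamental problem: the statement you are trying to prove is \Cref{conj:lee-exact-value-hDm}, a \emph{conjecture} of Lee et al.\ which the paper does not prove --- on the contrary, the paper refutes it. \Cref{thm:counterexamples-Delta-4}, proved via the pyramid/difference-set construction $\cD(C_m^\ell)$ and \Cref{lem:delta-of-pyramids}, gives $\h(4,m)\ge m^2+9m-3$ for $m\ge 3$, $\h(8,m)\ge m^2+19m-11$ for $m\ge 4$, and $\h(16,m)\ge m^2+33m-17$ for $m\ge 10$, each strictly exceeding the conjectured value $m^2+m+1+2m(\Delta-1)$; already the computed value $\h(4,3)=33>31$ contradicts the claimed equality. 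So no proof strategy can close the argument, and your proposal cannot be repaired.

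It is also instructive to see exactly where your plan breaks. The heart of your peeling lemma is the rigidity claim that, after a unimodular normalization, an extremal $\Delta$-modular matrix has a single elongated direction (only one nontrivial diagonal entry $d_m=\Delta$), so that at most $2m$ columns carry the maximal minor. This is false for $\Delta=4$: the extremizer $\cD(\pyr(H_2))=\bigl(2\cdot H_2\times\{0\}\bigr)\cup\bigl(H_2\times\{-1,1\}\bigr)$ with $H_2=\{\zero,\pm e_1,\pm e_2,\pm(e_1-e_2)\}$ contains $\pm 2e_1$, $\pm 2e_2$ and $\pm 2(e_1-e_2)$ simultaneously, i.e.\ three distinct doubled directions, yet $\Delta(\cD(\pyr(H_2)))=4$. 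Your ad-hoc computation $\card{\det(2e_1-2e_2,\,2e_1,\,e_3)}=4$ rules this out only when $\Delta=3$; for $\Delta=4$ the same minor is exactly $\Delta$ and is admissible, which is precisely why the conjecture fails first at $\Delta=4$. (Your own hedging about ``multiple candidate elongation directions'' is exactly the point at which the statement itself, not merely the proof, collapses.) Note also that even granting all your lemmas, the induction would only yield the upper bound, not the claimed equality. The paper's actual contribution in the direction of upper bounds is different in kind: it proves $\h(\Delta,m)\le m^2+m+1+2(\Delta-1)\sum_{i=0}^{4}\binom{m}{i}$ (\Cref{thm:polynomial-linear}) by counting columns in residue classes modulo the sublattice spanned by a maximal minor (\Cref{lem:residue-class-approach}) and bounding the shifted Heller constants via Sauer's Lemma and the infeasibility of translated Sauer matrices of size~$5$.
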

\noindent On the qualitative side, Conjecture~\ref{conj:lee-exact-value-hDm} implies that $h(\Delta,m) \le a(\Delta) m^2 + b(\Delta) m+c$ holds with $a(\Delta) \in \cO(1)$ and $b(\Delta) \in \cO(\Delta)$.
If this is true, we would also have $h(\Delta,m) \leq \cO(\Delta) m^2$, but even this estimate has not yet been confirmed since the currently available bounds are asymptotically too large for $\Delta \to \infty$. 
The authors of \cite[p.~24]{leepaatstallknechtxu2021polynomial} ask if there exists a bound of the form $\cO(m^d) \Delta$, for some constant $d \in \Z_{>0}$.
As our main result, we answer this question in the affirmative by showing that a bound of order $\cO(m^4) \Delta$ exists.

\begin{theorem}
\label{thm:polynomial-linear}
Let $\Delta,m \in \Z_{>0}$.
\begin{enumerate}[label=(\roman*)]
 \item If $m \geq 5$, then
 \[
\h(\Delta,m) \leq m^2 + m + 1 + 2\,(\Delta - 1) \cdot \sum_{i=0}^4 \binom{m}{i} \in \cO(m^4) \cdot \Delta.
\]

 \item If $m \geq 4$ and $\Delta$ is odd, then
 \[
\h(\Delta,m) \leq m^2 + m + 1 + 2\,(\Delta - 1) \cdot \sum_{i=0}^3 \binom{m}{i} \in \cO(m^3) \cdot \Delta.
\]
\end{enumerate}
\end{theorem}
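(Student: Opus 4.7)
The shape of the bound, namely $\h(1,m)=m^2+m+1$ plus $2(\Delta-1)$ times the number of subsets of $[m]$ of size at most $k$ (with $k=4$ in general and $k=3$ for odd $\Delta$), suggests decomposing the columns of an extremal matrix into a \emph{unimodular core} of size $\le m^2+m+1$ (controlled by Heller's theorem) plus an \emph{exceptional part} whose columns are tagged by a small subset $I\subseteq[m]$ together with one of $2(\Delta-1)$ non-unit scalars in $\{-\Delta,\dots,-2\}\cup\{2,\dots,\Delta\}$.

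Concretely, let $A\in\Z^{m\times n}$ be $\Delta$-modular with pairwise distinct columns attaining $n=\h(\Delta,m)$. I would choose an $m\times m$ submatrix $B$ of $A$ with $|\det B|=\Delta$ and, by left-multiplication with an element of $\GL_m(\Z)$ (which preserves the distinctness of columns and the $\Delta$-modular property), bring $B$ into Hermite normal form. For each column $v$ of $A$, write $y_v := B^{-1}v$; by Cramer's rule applied to $[B\,|\,v]$ and the $\Delta$-modular assumption, $y_v\in[-1,1]^m\cap\tfrac{1}{\Delta}\Z^m$. Columns with $y_v\in\Z^m$ lie in the $\Z$-span of the columns of $B$, and the $m\times m$ minors formed by such columns, divided by $\det B$, are integers of absolute value at most $1$. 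In the basis of $B$ these columns thus form a unimodular configuration, hence number at most $m^2+m+1$ by Heller.

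For each remaining column I would attach the residue $r_v := \Delta y_v \bmod \Delta \in\{0,\dots,\Delta-1\}^m\setminus\{0\}$. The crucial structural claim is then: $|\supp(r_v)|\le k$, and for every fixed support $I\subseteq[m]$ of size $\le k$ the residue $r_v$ ranges over at most $2(\Delta-1)$ distinct values. Summing over $|I|=0,1,\dots,k$ and adding the core bound $m^2+m+1$ yields the two stated inequalities.

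\textbf{Main obstacle.} The structural claim on the exceptional residues is the entire heart of the proof. Bounding $|\supp(r_v)|\le 4$ should come from comparing $m\times m$ minors of $A$ obtained by swapping one or two columns of $B$ with other columns, and invoking $\Delta$-modularity to rule out five simultaneous ``fractional coordinates''; for odd $\Delta$, an additional parity argument (absence of an element of order $2$ in $\Z/\Delta\Z$) should collapse one further coordinate to give $|\supp(r_v)|\le 3$. Isolating the \emph{scalar family}---showing that two residues with the same support must be proportional modulo $\Delta$, and hence cutting down $|(\Z/\Delta\Z)^I|$ from $\Delta^{|I|}$ to $2(\Delta-1)$---will require a parallel determinantal analysis. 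Most of the work, and the gap between the general and odd-$\Delta$ cases, is expected to concentrate here.
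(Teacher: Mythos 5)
Your setup — choosing a submatrix $B$ with $\card{\det B}=\Delta$, noting $y_v=B^{-1}v\in[-1,1]^m\cap\tfrac1\Delta\Z^m$, and bounding the columns with $y_v\in\Z^m$ by Heller's $m^2+m+1$ — coincides with the paper's Lemma~\ref{lem:residue-class-approach}. The gap is in how you count the remaining columns: your scheme bounds the number of distinct \emph{tags} $(I,r_v)$, not the number of columns, and nothing forces the tagging to be injective. In fact it is far from injective: in the standard lower-bound construction (the difference set of $\{\zero,e_1,\ldots,e_m\}\cup\{2e_1,\ldots,\Delta e_1\}$, with $B=(\Delta e_1,e_2,\ldots,e_m)$), each nontrivial residue class $ke_1+\Lambda$ contains the $2m$ pairwise distinct columns $ke_1$, $(k-\Delta)e_1$, $ke_1-e_i$, $(k-\Delta)e_1+e_i$ ($i\geq 2$), and all of them have the same residue $r_v=(k,0,\ldots,0)$, hence the same tag. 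So even if both of your structural claims were proved, summing $2(\Delta-1)$ over the supports of size $\leq k$ would not bound $n$; you would still need a bound on the number of columns sharing one residue value, and that is where essentially all of the work lies. The paper's proof does exactly this: within a fixed nontrivial class the columns are $B(\lfloor z\rfloor+t)$ with one common fractional vector $t\in[0,1)^m\cap\tfrac1\Delta\Z^m$ and varying integer parts $\lfloor z\rfloor\in\{-1,0,1\}^m$; the count per class (the constant $\hsd{\Delta}{m}$) is bounded by $2\sum_{i=0}^{4}\binom{m}{i}$, respectively $2\sum_{i=0}^{3}\binom{m}{i}$ for odd $\Delta$, via supports of the $\lfloor z\rfloor$'s (at most two columns per support), Sauer's Lemma, and the infeasibility of translated Sauer matrices of size $5$ (size $4$ forcing $t$-entries in $\{0,\tfrac12\}$, impossible for odd $\Delta$). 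The factor $\Delta-1$ then counts the classes — so the roles of your two factors are interchanged relative to the actual argument.

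Moreover, the claim $\card{\supp(r_v)}\leq 4$ is false as stated. Take $m=5$, $\Delta=2$, $A=(e_1,e_2,e_3,e_4,b,e_5)$ with $b=e_1+e_2+e_3+e_4+2e_5$: then $\Delta_5(A)=2$, the submatrix $B=(e_1,e_2,e_3,e_4,b)$ attains it, and $y_{e_5}=\tfrac12(-1,-1,-1,-1,1)^\intercal$, so $r_{e_5}=(1,1,1,1,1)$ has full support (left-multiplying by $\GL_5(\Z)$ to reach Hermite form does not change $B^{-1}v$). The $\Delta$-modularity condition simply does not restrict the support of the fractional part of a single column; the ``no shattered $5$-set'' phenomenon in the paper concerns the supports of the rounded parts $\lfloor B^{-1}v\rfloor\in\{-1,0,1\}^m$ of the many columns inside one residue class, not the residues themselves, and the odd-$\Delta$ improvement comes from excluding the translation entries $\tfrac12$ via $t\in\tfrac1\Delta\Z^m$, not from a parity constraint on residue supports. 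As it stands, the heart of the theorem — the polynomial bound on the per-class count $\hs(m)$, $\hsd{\Delta}{m}$ — is missing from the proposal, and the proposed substitute claims would not yield the bound even if true.
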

\noindent It remains an open question whether our bound can be improved, for all~$\Delta$, to a bound of order $\cO(m^d) \Delta$ for some exponent~$d < 4$. 

Based on computational experiments for small values of~$m$ and~$\Delta$, we found series of counterexamples to \Cref{conj:lee-exact-value-hDm} for~$\Delta \in \{4,8,16\}$.

\begin{theorem}
\label{thm:counterexamples-Delta-4}
We have
\begin{align*}
\h(4,m) &\geq m^2 + 9m - 3    &&\hspace{-1cm}\textrm{ for } \quad m \geq 3, \\
\h(8,m) &\geq m^2 + 19m - 11  &&\hspace{-1cm}\textrm{ for } \quad m \geq 4, \quad \textrm{ and} \\
\h(16,m) &\geq m^2 + 33m - 17 &&\hspace{-1cm}\textrm{ for } \quad m \geq 10.
\end{align*}
\end{theorem}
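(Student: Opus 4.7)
My plan is to establish each of the three inequalities by exhibiting an explicit $\Delta$-modular matrix with the claimed number of pairwise distinct columns. I would split the argument into two parts: base matrices at the smallest admissible $m$ in each bound, found by a computer search, and a lifting lemma that increases $m$ by one while adding the appropriate number of columns.

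For the base cases I would produce, for $\Delta \in \{4,8,16\}$ respectively, explicit matrices in $\Z^{3 \times 33}$, $\Z^{4 \times 81}$, and $\Z^{10 \times 413}$ whose maximal $m \times m$ minor equals $\Delta$. Since the entries of any extremal matrix may be assumed to be bounded in terms of $\Delta$ (for example, after right-multiplying by $\GL_m(\Z)$ to bring a chosen invertible $m \times m$ submatrix to Hermite normal form), the columns can be drawn from an a priori finite list, and the existence of the claimed matrices reduces to a finite verification. In practice, the base matrices would be discovered by an integer programming or SAT search and then rechecked directly by computing all $m \times m$ minors and confirming that no two columns coincide.

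The inductive step would be a lifting lemma of the form $\h(\Delta, m+1) \geq \h(\Delta, m) + 2m + c_\Delta$ with $c_4 = 10$, $c_8 = 20$, $c_{16} = 34$, valid as long as the base matrix at $m$ contains a prescribed collection of ``anchor'' columns (for instance $\zero$ together with $\pm e_1,\ldots,\pm e_m$ and small multiples $\pm p\, e_1$). Given such an $m \times n$ matrix $A$, the lift has the shape
\[
 \tilde A \;=\; \begin{pmatrix} A & w_1 & \cdots & w_k \\ 0 \cdots 0 & d_1 & \cdots & d_k \end{pmatrix},
\]
with $k = 2m + c_\Delta$ and pairs $(w_j,d_j)$ chosen so that no two columns of $\tilde A$ coincide and every $(m+1) \times (m+1)$ submatrix has absolute determinant at most $\Delta$. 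Expanding any such minor along the last row reduces that constraint to a bound on $|d_j|$ times certain $m \times m$ minors of $[\,A \mid w_1 \mid \cdots \mid w_k\,]$; a natural choice is to take each $w_j$ to be $\zero$, a signed unit vector $\pm e_i$, a small multiple $\pm p\, e_1$ for $2 \le p \le \Delta$, or another existing column of $A$, so that all relevant $m \times m$ minors remain controlled by $\Delta_m(A) = \Delta$. The additional $c_\Delta - 2$ columns beyond the naive Heller-style lift (which only contributes $2m + 2$) are precisely what pushes the bound past the Lee et al.\ conjecture.

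The main obstacle will be carrying out this lifting so that conditions (i) the $(m+1) \times (m+1)$ minors stay bounded by $\Delta$ and (ii) no pair of columns of $\tilde A$ is repeated hold simultaneously; both requirements constrain the $(w_j, d_j)$ severely and seem to force a case analysis tuned to each modulus $\Delta \in \{4,8,16\}$. It is also plausible that the lifting lemma only applies once $A$ carries a sufficiently rich anchor structure, which explains why the base case must be chosen with care and, in the $\Delta = 16$ case, requires as many as $m_0 = 10$ rows before the recursion can be started. Once the lifting lemma is proven and each of the three base matrices is verified, iterating the lemma from the base dimension upwards yields the stated bounds for all admissible $m$.
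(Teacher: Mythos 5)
Your overall plan (explicit base examples at the smallest admissible $m$, plus an inductive step gaining $2m+c_\Delta$ columns with $c_4=10$, $c_8=20$, $c_{16}=34$) runs parallel to the paper's construction in spirit, and your constants $c_\Delta$ are exactly the right ones. However, as written the proposal has two genuine gaps, and neither is a routine detail. First, the base cases are not supplied: for $\Delta=16$ you would need a concrete $10\times 413$ matrix with $\Delta_{10}=16$, and an IP/SAT search in dimension $10$ followed by ``computing all $m\times m$ minors'' is not realistically feasible (the paper's own exhaustive computations stop at much smaller parameters), so the existence of this base object is precisely what needs a construction, not a search. Second, the lifting lemma is only conjectured: you acknowledge that choosing the pairs $(w_j,d_j)$ so that all $(m+1)\times(m+1)$ minors stay at most $\Delta$ while gaining $2m+c_\Delta$ (rather than the Heller-style $2m+2$) new columns is the main obstacle, and your proposed candidates for $w_j$ (zero, $\pm e_i$, $\pm p\,e_1$, or existing columns of $A$) neither produce the required count nor come with an argument controlling the new minors. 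So the proof is not complete.

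The missing idea, which the paper supplies, is structural: one should not lift an arbitrary extremal matrix $A$, but keep track of a small generating set $S\subseteq\Z^m$ whose difference set $\cD(S)=S-S$ is the column set, and lift by passing to the pyramid $\pyr(S)=(S\times\{0\})\cup\{e_{m+1}\}$. Then $\cD(\pyr(S))=(\cD(S)\times\{0\})\cup(-S\times\{1\})\cup(S\times\{-1\})$, so the new columns are exactly $\pm(s,\mp 1)$ for $s\in S$, giving $2\card{S}$ new columns per step ($2m+10$, $2m+20$, $2m+34$ when one starts from the Heller sets $H_\ell=\cD(\{\zero,e_1,\ldots,e_\ell\})$ for $\ell=2,3,4$), and the determinant bound $\Delta(\cD(\pyr(S)))=\Delta(\cD(S))$ follows from a one-line column operation: subtract one lifted column from the others to clear the last coordinate and reduce to an $m\times m$ minor of $\cD(S)$. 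This also dissolves your base-case problem, since iterating the pyramid from $H_\ell$ produces the $\Delta\in\{4,8,16\}$ examples (including the $10\times 413$ one) with no search needed; only the small seed $\cD(\pyr(H_2))$, which realizes $\h(4,3)\geq 33$, came from computation, and even it is given by the closed formula above. Without tracking the generating set $S$, the inductive step as you set it up has no mechanism forcing the new minors to stay bounded, and I see no way to complete it along the lines you describe.
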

\noindent These lower bounds exceed the conjecture for~$\h(4,m),\h(8,m)$ and~$\h(16,m)$ by Lee et al.~by the additive terms $2(m-2)$, $4(m-3)$, and $2(m-9)$, respectively.
This means that the qualitative side of \Cref{conj:lee-exact-value-hDm} as described above still stands.

\subsection*{Organisation of the paper}
The paper is organized as follows.
In \Cref{sect:residue-classes}, we describe our geometric idea that explains the linearity in~$\Delta$ of the bounds in \Cref{thm:polynomial-linear}, and we introduce two variants of the Heller constant $\h(1,m)$ which we aim to polynomially bound in \Cref{sec:poly-bound-shifted-heller,sec:sauer-matrix-proof}.
In \Cref{sect:computations}, we describe our approach to compute the generalized Heller constant $\h(\Delta,m)$ for small parameters $\Delta,m$.
We also discuss the results of our computer experiments, and identify counterexamples to \Cref{conj:lee-exact-value-hDm} whose structure lead us to construct the lower bounds in \Cref{thm:counterexamples-Delta-4}. 
Finally, in \Cref{sect:open-problems}, we pose some natural open problems that result from our investigations.

\section{Counting by Residue Classes}
\label{sect:residue-classes}

Our main idea is to count the columns of a $\Delta$-modular integer matrix by residue classes of a certain lattice.
This is the geometric explanation for the linearity in~$\Delta$ of our upper bounds in \Cref{thm:polynomial-linear}.

To be able to count in the non-trivial residue classes, we need to extend the Heller constant $\h(1,m)$ to a \emph{shifted} setting.
Given a translation vector $t \in \R^m$ and a matrix $A \in \R^{m \times n}$, the shifted matrix $t + A := t \one^\intercal + A$ has columns $t + A_i$, where $A_1,\ldots,A_n$ are the columns of~$A$, and $\one$ denotes the all-one vector.

\begin{definition}[Shifted Heller constants]
\label{def:shifted-heller-constant}
Let $m \in \Z_{>0}$ and $\delta \in \Z_{\geq2}$.
\begin{enumerate}[label=(\roman*)]
 \item We define the \emph{shifted Heller constant} $\hs(m)$ as the maximal number~$n$ such that there exists a translation vector $t \in [0,1)^m \setminus \{\zero\}$ and a matrix $A \in \{-1,0,1\}^{m \times n}$ with pairwise distinct columns such that $t+A$ is totally $1$-submodular, that is, $\max_{k \in [m]} \Delta_k(t+A) \leq 1$.

 \item We define the \emph{refined shifted Heller constant} $\hsd{\delta}{m}$ as the maximal number~$n$ such that there exists a vector $t \in [0,1)^m \cap (\tfrac{1}{\delta}\Z)^m \setminus \{\zero\}$ and a matrix $A \in \{-1,0,1\}^{m \times n}$ with pairwise distinct columns such that $t+A$ is totally $1$-submodular, that is, $\max_{k \in [m]} \Delta_k(t+A) \leq 1$.
 
\end{enumerate}
\end{definition}
\noindent Note that, in contrast to the generalized Heller constant $\h(\Delta,m)$, we do not necessarily require $t+A$ to have full rank in the above definition, but we restrict $A$ to have entries in $\{-1,0,1\}$ only.
Compared with $\hs(m)$, in the definition of $\hsd{\delta}{m}$ we only allow the translation vectors~$t$ to have rational coordinates all of whose denominators are divisors of~$\delta$.
Hence, we clearly have $\hsd{\delta}{m} \leq \hs(m)$, for any $\delta \geq 2$.
Moreover, we exclude $t = \zero$ in the definition of both $\hs(m)$ and $\hsd{\delta}{m}$ in order to allow the possibility that $\hsd{\delta}{m} \leq \h(1,m)$, or even $\hs(m) \leq \h(1,m)$.

The reason for restricting the non-zero translation vectors to the half-open unit cube $[0,1)^m$ becomes apparent in the proof of our main lemma.
However, we need to prepare it with an observation on the representation of integer points modulo a sublattice of~$\Z^m$.

\begin{proposition}
\label{prop:coefficients}
Let $\Lambda \subseteq \Z^m$ be a full-dimensional sublattice with basis $b_1,\ldots,b_m \in \Lambda$ and index~$\Delta$.
Then, for every $x \in \Z^m$, the uniquely determined coefficients $\alpha_1,\ldots,\alpha_m$ in the representation
\[
x = \alpha_1 b_1 + \ldots + \alpha_m b_m
\]
satisfy $\alpha_i \in \tfrac{1}{\Delta} \Z$, for every $1 \leq i \leq m$.
\end{proposition}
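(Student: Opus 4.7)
The plan is to reduce the claim to Cramer's rule applied to the basis matrix of $\Lambda$. Let $B \in \Z^{m \times m}$ be the matrix whose columns are $b_1,\ldots,b_m$. The first step is to recall the standard fact that, for a full-dimensional sublattice $\Lambda \subseteq \Z^m$, the index $[\Z^m : \Lambda]$ equals $\card{\det(B)}$; thus $\card{\det(B)} = \Delta$.

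Next, I would rewrite the representation $x = \alpha_1 b_1 + \ldots + \alpha_m b_m$ as the linear system $B \alpha = x$, where $\alpha = (\alpha_1,\ldots,\alpha_m)^\intercal$. Since $\det(B) \neq 0$, the vector $\alpha$ is uniquely given by $\alpha = B^{-1} x$, and by Cramer's rule
\[
\alpha_i = \frac{\det(B^{(i)})}{\det(B)}, \qquad i = 1,\ldots,m,
\]
where $B^{(i)}$ denotes the matrix obtained from $B$ by replacing its $i$-th column with $x$. Because $B^{(i)} \in \Z^{m \times m}$, the numerator is an integer, while the denominator has absolute value $\Delta$. Hence $\alpha_i \in \tfrac{1}{\Delta}\Z$, as claimed.

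The proof has no real obstacle: everything reduces to the identification of the index of a sublattice with the absolute value of the determinant of a basis matrix, together with Cramer's rule. The only point worth double-checking is that one may freely choose $b_1,\ldots,b_m$ to be a basis of~$\Lambda$ (rather than merely a generating set) so that $B$ is invertible; this is always possible for a full-dimensional sublattice of~$\Z^m$.
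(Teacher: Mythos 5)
Your proof is correct, but it takes a different route from the paper. The paper argues group-theoretically: since $\Z^m/\Lambda$ has order $\Delta$ (this is exactly the hypothesis that the index is $\Delta$), Lagrange's theorem gives $\Delta\,(x+\Lambda) = \Lambda$, i.e.\ $\Delta x \in \Lambda$; writing $\Delta x = \beta_1 b_1 + \ldots + \beta_m b_m$ with $\beta_i \in \Z$ and dividing by $\Delta$ yields $\alpha_i = \beta_i/\Delta \in \tfrac{1}{\Delta}\Z$ by uniqueness of the coefficients. Your argument instead goes through Cramer's rule, $\alpha_i = \det(B^{(i)})/\det(B)$ with $B^{(i)} \in \Z^{m\times m}$, combined with the identity $[\Z^m:\Lambda] = \card{\det(B)}$. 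Both are valid; the paper's proof is shorter and needs no input beyond the definition of the index, whereas yours invokes the determinant--index identity, which is standard but itself requires a proof (via Smith normal form or a fundamental-domain volume argument). On the other hand, your route is more explicit, exhibiting each $\alpha_i$ as a ratio of integer determinants, which can be useful if one wants to track which numerators actually occur. One small remark: your closing caveat about being able to \emph{choose} $b_1,\ldots,b_m$ to be a basis is unnecessary here, since the proposition already assumes that $b_1,\ldots,b_m$ is a basis of the full-dimensional lattice $\Lambda$, so $B$ is automatically invertible.
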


\begin{proof}
By definition, the finite group $\Z^m  / \Lambda$ has order~$\Delta$.
Hence, the order of any of its elements $x + \Lambda$, where $x \in \Z^m$, divides~$\Delta$.
\end{proof}

\begin{lemma}
\label{lem:residue-class-approach}
For every $\Delta , m \in \Z_{>0}$, we have
\[
\h(\Delta,m) \leq \h(1,m) + (\Delta-1) \cdot \hsd{\Delta}{m}.
\]
In particular, $\h(\Delta,m) \leq \h(1,m) + (\Delta-1) \cdot \hs(m)$.
\end{lemma}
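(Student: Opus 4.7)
The plan is to fix an $m \times m$ submatrix $B$ of $A$ with $|\det B| = \Delta$ and pass to the rational matrix $A' = B^{-1} A$. Writing each column $a'_j$ of $A'$ uniquely as $a'_j = t_j + v_j$ with $t_j \in [0,1)^m$ (the componentwise fractional part) and $v_j \in \Z^m$ partitions the columns of $A$ according to the $\Delta$ residue classes of $\Z^m$ modulo the sublattice $\Lambda = B\Z^m$. Each class will then be bounded separately by unimodular-type tools.

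The key structural claim is that $A'$ is totally $1$-submodular. I would prove this via a Jacobi-style identity: any $k \times k$ minor of $A'$ with row-set $I$ and column-set $C$ equals $\pm \det(B^{(I \to C)})/\det(B)$, where $B^{(I \to C)}$ is the $m \times m$ matrix obtained from $B$ by replacing its columns at positions $I$ with the columns of $A$ indexed by $C$. Since $B^{(I \to C)}$ is a submatrix of $A$ up to column repetitions (in which case its determinant vanishes), its absolute determinant is at most $\Delta$, so the minor of $A'$ has absolute value at most $1$. In particular, the entries of $A'$ lie in $[-1,1]$, and combining this with \Cref{prop:coefficients} applied columnwise forces $t_j \in [0,1)^m \cap \tfrac{1}{\Delta}\Z^m$ and $v_j \in \{-1,0,1\}^m$: coordinatewise, $(t_j)_i = 0$ gives $(v_j)_i \in \{-1,0,1\}$, while $(t_j)_i > 0$ forces $(v_j)_i \in \{-1,0\}$ because $(v_j)_i + (t_j)_i$ must lie in $[-1,1]$.

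The counting step is then routine. For the trivial residue class $t_j = \zero$, the corresponding $v_j$'s assemble into an integer matrix $V^{(0)}$ with distinct columns that contains the $m \times m$ identity (coming from the columns of $A$ that form $B$, as $B^{-1}B = I$) and is $1$-submodular as a submatrix of $A'$; hence $V^{(0)}$ is $1$-modular and Heller's constant bounds this class by $\h(1,m)$. For each of the $\Delta - 1$ nonzero residue classes $t \in [0,1)^m \cap \tfrac{1}{\Delta}\Z^m \setminus \{\zero\}$, the corresponding $v_j$'s form a distinct-columns matrix with $\{-1,0,1\}$-entries such that $t$ plus this matrix is totally $1$-submodular, which fits \Cref{def:shifted-heller-constant}(ii) exactly and bounds the class by $\hsd{\Delta}{m}$. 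Summation yields the main inequality, and the ``in particular'' estimate is immediate from $\hsd{\Delta}{m} \leq \hs(m)$.

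I expect the main obstacle to be establishing the total $1$-submodularity of $A'$: the hypothesis $\Delta_m(A) = \Delta$ only constrains the top-order minors of $A$, so controlling all lower-order minors of $A'$ requires the careful Jacobi-cofactor reduction above rather than any direct determinantal estimate.
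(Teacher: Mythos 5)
Your proof is correct and follows essentially the same route as the paper: fix a maximal-determinant submatrix $B$, pass to coordinates with respect to $\Lambda = B\Z^m$ (i.e.\ to $B^{-1}A$), use \Cref{prop:coefficients} to get $\tfrac{1}{\Delta}\Z$-entries, split each column into integral and fractional parts, and count the trivial residue class by $\h(1,m)$ and each of the $\Delta-1$ nontrivial ones by $\hsd{\Delta}{m}$. Your Jacobi/Cramer identity for the $k\times k$ minors of $B^{-1}A$ is exactly the paper's device of augmenting with the unit vectors $e_1,\ldots,e_m$ to turn lower-order minors into full-size minors of $A$, so the key step coincides as well.
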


\begin{proof}
Let $A \in \Z^{m \times n}$ be a matrix with $\Delta_m(A) = \Delta$ and  pairwise distinct columns and let $X_A \subseteq \Z^m$ be the set of columns of~$A$.
Further, let $b_1,\ldots,b_m \in X_A$ be such that $\card{\det(b_1,\ldots,b_m)} = \Delta$ and consider the parallelepiped
\[
P_A := [-b_1,b_1] + \ldots + [-b_m,b_m] = \bigg\{ \sum_{i=1}^m \alpha_i b_i : -1 \leq \alpha_i \leq 1, \forall i \in [m]\bigg\} .
\]
Observe that $X_A \subseteq P_A$.
Indeed, assume to the contrary that there is an $x = \sum_{i=1}^m \alpha_i b_i \in X_A$, with, say $|\alpha_j| > 1$.
Then,
\[
\card{\det(b_1,\ldots,b_{j-1},x,b_{j+1},\ldots,b_m)} = \card{\alpha_j} \Delta > \Delta,
\]
which contradicts that $A$ was chosen to be $\Delta$-modular.

Now, consider the sublattice $\Lambda := \Z b_1 + \ldots + \Z b_m$ of $\Z^m$, whose index in~$\Z^m$ equals~$\Delta$.
We seek to bound the number of elements of $X_A$ that fall into a fixed residue class of~$\Z^m$ modulo~$\Lambda$.
To this end, let $x \in \Z^m$ and consider the residue class $x + \Lambda$.
Every element $z \in (x + \Lambda) \cap P_A$ is of the form $z = \sum_{i=1}^m \alpha_i b_i$, for some uniquely determined $\alpha_1,\ldots,\alpha_m \in [-1,1]$ which, in view of \Cref{prop:coefficients}, satisfy $\alpha_i \in \tfrac{1}{\Delta} \Z$, for every $1 \leq i \leq m$.
Moreover, $z$ can be written as
\begin{align}
z &= \sum_{i=1}^m \lfloor \alpha_i \rfloor b_i + \sum_{i=1}^m \{\alpha_i\} b_i,\label{eqn:representation-z-refined}
\end{align}
where $\{\alpha_i\} = \alpha_i - \lfloor \alpha_i \rfloor \in \left\{ 0,\tfrac{1}{\Delta},\ldots,\tfrac{\Delta-1}{\Delta}\right\}$ is the fractional part of $\alpha_i$, and where $\bar x := \sum_{i=1}^m \{\alpha_i\} b_i$ is the unique representative of $x + \Lambda$ in the half-open parallelepiped $[\zero,b_1) + \ldots + [\zero,b_m)$, and in particular, is independent of~$z$.
We use the notation $\lfloor z \rfloor := (\lfloor \alpha_1 \rfloor, \ldots, \lfloor \alpha_m \rfloor) \in \{-1,0,1\}^m$ and $\{ z \} := (\{\alpha_1\},\ldots,\{\alpha_m\}) \in [0,1)^m$ and thus have $z = B (\lfloor z \rfloor + \{ z \})$, where $B = (b_1,\ldots,b_m) \in \Z^{m \times m}$.

Because the vectors $(x + \Lambda) \cap X_A$ constitute a $\Delta$-submodular system and since $\card{\det(b_1,\ldots,b_m)} = \Delta$, the set of vectors $\{ \lfloor z \rfloor + \{ z \} : z \in (x + \Lambda) \cap X_A \}$ are a $1$-submodular system.
For the residue class $\Lambda$, this system is given by $\{ \lfloor z \rfloor : z \in \Lambda \cap X_A \} \subseteq \{-1,0,1\}^m$ and moreover has full rank as it contains $e_1,\ldots,e_m$, and we are thus in the setting of the classical Heller constant $\h(1,m)$.

For the $\Delta - 1$ non-trivial residue classes $x + \Lambda$, $x \notin \Lambda$, we are in the setting of the refined shifted Heller constant $\hsd{\Delta}{m}$.
Indeed, as the matrix with columns $\{b_1,\ldots,b_m\} \cup \left((x + \Lambda)\cap X_A\right) \subseteq X_A$ is $\Delta$-submodular, the matrix with columns
\[
\{e_1,\ldots,e_m\} \cup \left\{ \lfloor z \rfloor + \{ z \} : z \in (x + \Lambda) \cap X_A \right\}
\]
has all its minors, of any size, bounded by~$1$ in absolute value.
By the definition of $\hsd{\Delta}{m}$, the second set in this union has at most $\hsd{\Delta}{m}$ elements.
As a consequence, we get $n = \card{X_A} \leq \h(1,m) + (\Delta-1) \cdot \hsd{\Delta}{m}$, as desired.
\end{proof}

\begin{remark}\ 
\begin{enumerate}[label=(\roman*)]
 \item The proof above shows that we actually want to bound the number of columns~$n$ of a matrix $A \in \{-1,0,1\}^{m \times n}$ such that the system
\[
\{e_1,\ldots,e_m\} \cup \{t+A_1, \ldots, t + A_n\}
\]
is $1$-submodular, for some $t \in [0,1)^m \setminus \{\zero\}$.
However, $t+A$ is totally $1$-submodular if and only if $\{e_1,\ldots,e_m\} \cup (t+A)$ is $1$-submodular.
 
 \item As any matrix $A \in \{-1,0,1\}^{m \times n}$ with pairwise distinct columns can have at most $3^m$ columns, one trivially gets the bound $\hs(m) \leq 3^m$.
Thus, \Cref{lem:residue-class-approach} directly implies the estimate $\h(\Delta,m) \leq 3^m \cdot \Delta$.
\end{enumerate}
\end{remark}

\subsection{Small Dimensions and Lower Bounds in the Shifted Setting}
\label{sec:lower-bounds}

Recall that the original Heller constant is given by $\h(1,m) = m^2 + m + 1$.
The following exact results for dimensions two and three show the difference between this original (unshifted) and the shifted setting grasped by~$\hs(m)$.
Note that as in the shifted setting we require $t \neq \zero$, the Heller constant $\h(1,m)$ is not a lower bound on the shifted Heller constant $\hs(m)$.

\begin{proposition}
\label{prop:shifted-Heller-dim-2-and-3}
We have $\hs(2) = 6$ and $\hs(3) = 12$.
\end{proposition}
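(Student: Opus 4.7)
My plan is to prove the two identities by combining explicit constructions for the lower bounds with a support-based case analysis for the upper bounds. For $\hs(2) \geq 6$ I would take $t = (\tfrac{1}{2}, 0)$ and let the columns of $A$ run through $\{-1,0\} \times \{-1,0,1\}$; for $\hs(3) \geq 12$ I would take $t = (\tfrac{1}{2}, \tfrac{1}{2}, 0)$ and let the columns of $A$ run through $\{-1,0\}^2 \times \{-1,0,1\}$. In both cases the entries of the shifted rows of $t+A$ lie in $\{\pm \tfrac{1}{2}\}$ and the remaining row in $\{-1,0,1\}$, so a direct expansion confirms that every $k \times k$ minor with $k \leq m$ lies in $[-1,1]$, establishing total $1$-submodularity.

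For the upper bounds, the key observation is that $t_i > 0$ combined with $A_{ij} = 1$ yields $(t+A)_{ij} > 1$, so $t_i > 0$ forces $A_{ij} \in \{-1,0\}$ while $t_i = 0$ allows $A_{ij} \in \{-1,0,1\}$. Setting $s := \card{\{i : t_i > 0\}} \geq 1$ then gives the naive bound $n \leq 2^s \cdot 3^{m-s}$. For $m = 2$ this is already $\leq 6$; for $m = 3$ the cases $s \in \{2,3\}$ yield $n \leq 12$ and $n \leq 8$. Only $m = 3$ with $s = 1$ needs extra work; WLOG $t = (t_1, 0, 0)$ for some $t_1 \in (0,1)$. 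When $t_1 \neq \tfrac{1}{2}$, the $2 \times 2$ minor in rows $\{1, i\}$ of two columns sharing row-$1$ value $r \in \{t_1, t_1 - 1\}$ equals $r(a_{i,j_2} - a_{i,j_1})$, and whichever of $\{t_1, 1 - t_1\}$ strictly exceeds $\tfrac{1}{2}$ restricts rows $2$ and $3$ of that class to diameter-$1$ subsets of $\{-1,0,1\}$, capping that class at $4$ distinct columns; Heller's $\h(1,2) = 7$ applied to rows $2$ and $3$ of the whole matrix then yields $n \leq 4 + 7 = 11$.

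The main obstacle is the balanced subcase $t_1 = \tfrac{1}{2}$, where both row-$1$ classes could a priori attain $7$ distinct values in rows $2, 3$, for a naive total of $14$. Here I would expand $\det(t+A)$ for any triple of columns via multilinearity as $\tfrac{1}{2}(\epsilon_1 m_{11} - \epsilon_2 m_{12} + \epsilon_3 m_{13})$, where $m_{1j}$ is the $2 \times 2$ minor of rows $2, 3$ after deleting column $j$ and $\epsilon_j = 2 a_{1j} + 1 \in \{-1,1\}$ encodes the row-$1$ value. Calling a value in rows $2, 3$ \emph{doubled} if it occurs with both choices of $\epsilon_j$, the condition $|\det(t+A)| \leq 1$ applied across all $8$ sign combinations available for a triple of doubled values forces $|m_{11}| + |m_{12}| + |m_{13}| \leq 2$; since each $m_{1j}$ is integer, some $m_{1j}$ must vanish, i.e., two of the three values are parallel through the origin. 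A short combinatorial lemma then shows that any subset of $\{-1,0,1\}^2$ in which every triple contains a parallel pair lies on at most two lines through the origin, so has at most $5$ elements. Writing $k$ and $\ell$ for the numbers of doubled and non-doubled values respectively, we obtain $k \leq 5$ and $k + \ell \leq 7$ (the latter from Heller's bound on rows $2, 3$), hence $n = 2k + \ell = k + (k + \ell) \leq 5 + 7 = 12$, completing $\hs(3) \leq 12$.
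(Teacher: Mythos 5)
Your proposal is correct, and for the only genuinely hard step it takes a different route than the paper. The lower-bound constructions and the easy upper-bound cases coincide with the paper's argument: the same examples with $t=(\tfrac12,0)$ and $t=(\tfrac12,\tfrac12,0)$, and the same observation that a positive coordinate of $t$ bans $+1$ entries in that row, which settles $m=2$ and the cases $s\in\{2,3\}$ for $m=3$. The divergence is in the case $s=1$. The paper assumes $n\geq 13$, uses pigeonhole to extract a class of $7$ columns sharing the first coordinate, invokes the essential uniqueness of the Heller-extremal configuration in the last two rows, and then exhibits explicit $3\times 3$ submatrices whose minor conditions force $t_1\geq 1$ or $t_1\leq 0$, a contradiction. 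You instead split on whether $t_1=\tfrac12$: for $t_1\neq\tfrac12$ a direct $4+7$ count via the $2\times 2$ minors $r(a_{i,j_2}-a_{i,j_1})$; for $t_1=\tfrac12$ a sign-symmetrization argument on ``doubled'' values, where the $8$ available sign patterns force $\card{m_{11}}+\card{m_{12}}+\card{m_{13}}\leq 2$, hence a vanishing cofactor, hence at most two lines through the origin and $k\leq 5$, giving $n=2k+\ell\leq 5+7=12$. Your version avoids the uniqueness-of-extremizer input and the ad hoc submatrices $B,B',C,C'$, and its doubling idea is closer in spirit to the Sauer-matrix analysis later in the paper; the paper's version gets the contradiction more directly once uniqueness is granted. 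Two small points to tidy up: the appeal to $\h(1,2)=7$ should be made for the set of distinct (row $2$,$3$)-values, respectively per class (columns of the whole matrix need not be distinct there), together with the trivial remark that a rank-deficient $\{-1,0,1\}$-system on two rows has at most $3$ distinct columns; and for the $m=3$ lower-bound example the naive triangle-inequality expansion of a $3\times 3$ minor only gives $\tfrac32$, so one should add that among any three columns two agree or are opposite in the two shifted rows, making one cofactor vanish and bounding the minor by $1$ (or simply note the verification is a finite routine check, as the paper does).
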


\begin{proof}
First, we show that $\hs(2) = 6$.
Let $A \in \{-1,0,1\}^{2 \times n}$ have distinct columns and let $t \in [0,1)^2 \setminus \{\zero\}$ be such that $t+A$ is totally $1$-submodular.
Since $t \neq \zero$, it has a non-zero coordinate, say $t_1 > 0$.
As the $1\times1$ minors of $t+A$, that is, the entries of $t+A$, are bounded in absolute value by~$1$, we get that the first row of $A$ can only have entries in $\{-1,0\}$.
This shows already that $n \leq 6$, as there are simply only $6$ options for the columns of~$A$ respecting this condition.

An example attaining this bound is given by
\[
A =
{\small
\left[\!\begin{array}{rrr|rrr}
-1 & -1 & -1 & 0 & 0 & 0 \\
-1 & 0 & 1 & -1 & \phantom{-}0 & \phantom{-}1
\end{array}\!\right]}
\quad\textrm{ and }\quad
t =
{\small
\begin{bmatrix}
1/2 \\ 0
\end{bmatrix}}.
\]
One can check that (up to permutations of rows and columns) this is actually the unique example $(A,t)$ with $6$ columns in~$A$.

Now, we turn our attention to proving $\hs(3) = 12$.
The lower bound follows by the existence of the following matrix and translation vector
\[
A =
{\small
\left[\!\begin{array}{rrr|rrr|rrr|rrr}
-1 & -1 & -1 & -1 & -1 & -1 &  0 &  0 &  0 &  0 & \phantom{-}0 & \phantom{-}0 \\
-1 & -1 & -1 &  0 &  0 &  0 & -1 & -1 & -1 &  0 & 0 & 0 \\
-1 &  0 &  1 & -1 &  0 &  1 & -1 &  0 &  1 & -1 & 0 & 1
\end{array}\!\right]}
\ ,\ 
t = 
{\small
\begin{bmatrix}
1/2 \\ 1/2 \\ 0
\end{bmatrix}}.
\]
Checking that $t+A$ is indeed totally $1$-submodular is a routine task that we leave to the reader.

For the upper bound, let $A \in \{-1,0,1\}^{3 \times n}$ and $t \in [0,1)^3 \setminus \{\zero\}$ be such that $t+A$ is totally $1$-submodular.
Let~$s$ be the number of non-zero entries of~$t \neq \zero$.
Just as we observed for $\hs(2)$, we get that there are~$s \geq 1$ rows of~$A$ only containing elements from~$\{-1,0\}$.
Thus, if $s=3$ there are only $2^3 = 8$ possible columns and if $s=2$, there are only $2^2 \cdot 3 = 12$ possible columns, showing that $n \leq 12$ in both cases.

We are left with the case that $s=1$, and we may assume that~$A$ has no entry equal to~$1$ in the first row and that $t_1 > 0$.
Assume for contradiction that $n \geq 13$.
There must be $\ell \geq 7$ columns of~$A$ with the same first coordinate, which we subsume into the submatrix~$A'$.
By the identity $\h(1,2) = 7$ applied to the last two rows, and $t_2 = t_3 = 0$, we must have $\ell = 7$ and up to permutations and multiplication of any of the last two rows by $-1$, $A' =
{\tiny \left[\!\begin{array}{*{7}r}
 a & a & a &  a &  a &  a &  a \\
 0 & 1 & 0 & -1 &  0 &  1 & -1 \\
 0 & 0 & 1 &  0 & -1 & -1 &  1
\end{array}\!\right]}$,
for some $a \in \{-1,0\}$.
Since the absolute values of the $2 \times 2$ minors of $t+A$ are bounded by~$1$, the remaining $n-\ell \geq 6$ columns of~$A$ are different from $(b,1,1)^\intercal$ and $(b,-1,-1)^\intercal$, where $b$ is such that $\{a,b\} = \{-1,0\}$.
Under these conditions, we find that~$A$ contains either
$B = {\tiny \left[\!\begin{array}{*{3}r}
-1 & -1 &  0 \\
 1 &  0 &  1 \\
 0 & -1 & -1
\end{array}\!\right]}$,
$B' = {\tiny \left[\!\begin{array}{*{3}r}
-1 & -1 &  0 \\
-1 &  0 & -1 \\
 0 &  1 &  1
\end{array}\!\right]}$,
$C = {\tiny \left[\!\begin{array}{*{3}r}
 0 &  0 & -1 \\
 1 &  0 &  1 \\
 0 & -1 & -1
\end{array}\!\right]}$
or
$C' = {\tiny \left[\!\begin{array}{*{3}r}
 0 &  0 & -1 \\
-1 &  0 & -1 \\
 0 &  1 &  1
\end{array}\!\right]}$
as a submatrix.
However, both the conditions $\card{\det(t+B)} \leq 1$ and $\card{\det(t+B')} \leq 1$ give $t_1 \geq 1$, and both $\card{\det(t+C)} \leq 1$ and $\card{\det(t+C')} \leq 1$ give $t_1 \leq 0$.
Hence, in either case we get a contradiction to the assumption that $0 < t_1 < 1$.
%
\end{proof}

Combining \Cref{lem:residue-class-approach}, the identity $\h(1,m) = m^2 + m + 1$, and \Cref{prop:shifted-Heller-dim-2-and-3} yields the bounds $\h(\Delta,2) \leq 6 \Delta + 1$ and $\h(\Delta,3) \leq 12 \Delta + 1$.
The latter bound improves upon the bound~\eqref{eqn:lee-et-al-bound} of Lee et al.
However, as $\h(\Delta,2) = 4 \Delta + 3$ by~\eqref{eqn:lee-et-at-exact}, we see that the approach via the shifted Heller constant~$\hs(m)$ cannot give optimal results for all~$m$.

A quadratic lower bound on $\hs(m)$ can be obtained as follows:

\begin{proposition}
\label{prop:shifted-heller-lower-bound}
For every $m \in \Z_{>0}$, we have
\[
\hs(m) \geq \h(1,m-1) = m(m-1)+1.
\]
\end{proposition}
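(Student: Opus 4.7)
The plan is to prove the bound by a direct construction. By the definition of the classical Heller constant, I fix a totally $1$-submodular matrix $B \in \{-1,0,1\}^{(m-1) \times \ell}$ with $\ell = \h(1,m-1) = m(m-1)+1$ pairwise distinct columns, and build
\[
A := \begin{pmatrix} \mathbf{0}^\intercal \\ B \end{pmatrix} \in \{-1,0,1\}^{m \times \ell}
\]
by prepending a zero row. The matrix $A$ inherits pairwise distinct columns from $B$. I then choose the shift
\[
t := \tfrac{1}{m}\, e_1 \;\in\; [0,1)^m \setminus \{\mathbf{0}\},
\]
which lies in the required set for $m \geq 2$ (the degenerate case $m = 1$ is immediate since $\h(1,0) = 1$ and the pair $A = [0]$, $t = \tfrac{1}{2}$ already realizes $\hs(1) \geq 1$).

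The remaining work is to verify that $t + A$ is totally $1$-submodular, which I would do by a simple case split on an arbitrary $k \times k$ submatrix $V$ of $t + A$. If $V$ avoids the prepended top row, then $V$ is a submatrix of $B$ and so $|\det V| \leq 1$ by the total $1$-submodularity of $B$. If instead $V$ contains the constant row $\tfrac{1}{m}\mathbf{1}^\intercal$, then Laplace expansion along that row, together with the fact that every $(k-1) \times (k-1)$ minor of $B$ lies in $\{-1, 0, 1\}$, yields
\[
|\det V| \;=\; \frac{1}{m}\,\Bigl|\sum_{j=1}^{k} (-1)^{j+1} M_j\Bigr| \;\leq\; \frac{k}{m} \;\leq\; 1.
\]
This gives $\hs(m) \geq \ell = m(m-1)+1$, as desired.

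There is no serious obstacle; the one point worth flagging is the choice $t_1 = \tfrac{1}{m}$. The more tempting value $t_1 = \tfrac{1}{2}$ already fails once $k = 3$, since the Laplace expansion above can reach absolute value $\tfrac{3}{2}$. The choice $t_1 = \tfrac{1}{m}$ (or indeed any value in $(0, \tfrac{1}{m}]$) compensates exactly for the fact that the expansion may contain up to $m$ signed $\pm 1$ terms.
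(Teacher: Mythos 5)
Your construction and verification coincide with the paper's own proof: prepend a zero row to an extremal totally unimodular $(m-1)$-row matrix, shift by $t = \tfrac{1}{m}e_1$, and bound the minors involving the new row via Laplace expansion along it together with the triangle inequality. The argument is correct (and your remark on why $t_1=\tfrac{1}{m}$ rather than $\tfrac12$ is needed, plus the $m=1$ check, only adds detail beyond what the paper spells out).
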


\begin{proof}
Let $A' \in \{-1,0,1\}^{(m-1) \times n}$ be a totally unimodular matrix with $n = \h(1,m-1)$ columns, and let $A \in \{-1,0,1\}^{m \times n}$ be obtained from~$A'$ by simply adding a zero-row as the first row.
Then, for the translation vector $t = (\frac{1}{m},0,\ldots,0)^\intercal$ the matrix $t+A$ is totally $1$-submodular.

Indeed, we only need to look at its $k \times k$ minors, for $k \leq m$, that involve the first row, as~$A'$ is totally unimodular by choice.
But then, the triangle inequality combined with developing the given minor by the first row, shows that its absolute value is bounded by~$1$.
\end{proof}

The proof shows that the same lower bound holds for the refined shifted Heller constant $\hsd{\mathit{m}}{m}$.
Note that the denominators of the allowed translation vectors for this constant depend on~$m$ though.

\section{Polynomial Upper Bounds on \texorpdfstring{$\hsd{\Delta}{m}$}{h\_s\textasciicircum\textDelta(m)} and \texorpdfstring{$\hs(m)$}{h\_s(m)}}
\label{sec:poly-bound-shifted-heller}

An elegant and alternative proof for Heller's result that $\h(1,m) = m^2 + m + 1$ has been suggested by Bixby \& Cunningham~\cite{bixbycunningham1987short} and carried out in detail in Schrijver's book~\cite[\S~21.3]{schrijver1986theory}.
They first reduce the problem to consider only the supports of the columns of a given (totally) unimodular matrix and then apply Sauer's Lemma from extremal set theory that guarantees the existence of a large cardinality set that is shattered by a large enough family of subsets of~$[m]$.

We show that this approach can in fact be adapted for the shifted Heller constant $\hs(m)$.
The additional freedom in the problem that is introduced by the translation vectors $t \in [0,1)^m \setminus \{\zero\}$ makes the argument a bit more involved, but still gives a low degree polynomial bound.
To this end, we write $\supp(y) := \left\{ j \in [m] : y_j \neq 0 \right\}$ for the \emph{support} of a vector $y \in \R^m$ and
\[
\cE_A := \left\{ \supp(A_i) : i \in [n] \right\} \subseteq 2^{[m]}
\]
for the family of supports in a matrix~$A \in \R^{m \times n}$ with columns $A_1,\ldots,A_n$.
We use the notation~$2^Y$ for the power set of a finite set~$Y$.

Just as in the unshifted Heller setting, each support can be realized by at most two columns of~$A$, if there exists a translation vector $t \in [0,1)^m$ such that $t+A$ is totally $1$-submodular.

\begin{proposition}
\label{prop:shifted-heller-supports}
Let $A \in \{-1,0,1\}^{m \times n}$ and $t \in [0,1)^m$ be such that $\Delta_k(t + A) \leq 1$, for $k \in \{1,2\}$.
Then, each $E \in \cE_A$ is the support of at most two columns of~$A$.
\end{proposition}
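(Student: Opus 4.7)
I argue by contradiction. Suppose some $E \in \cE_A$ is the support of three distinct columns $A_i, A_j, A_k$ of $A$; outside $E$ these columns are all zero, while on $E$ their entries lie in $\{-1,+1\}$.

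First I use the $1 \times 1$ minor bound, together with the hypothesis $t \in [0,1)^m$, to restrict the possible entries. For any row $r \in [m]$ and any column $A_\ell$ (with $\ell \in \{i,j,k\}$) we need $|t_r + A_{\ell,r}| \leq 1$. Since $0 \le t_r < 1$, the value $+1$ is forbidden in row $r$ whenever $t_r > 0$, because then $t_r + 1 > 1$. Combined with $r \in E$ (so $A_{\ell,r} \ne 0$), this forces $A_{i,r} = A_{j,r} = A_{k,r} = -1$ on every row $r \in E$ with $t_r > 0$. Consequently, the three columns coincide on $E \setminus F$, where
\[
F := \{r \in E : t_r = 0\},
\]
and since they are pairwise distinct, they must pairwise differ somewhere in $F$. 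Note that $A_{\ell, r} \in \{-1,+1\}$ for all $\ell \in \{i,j,k\}$ and $r \in F$.

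Next I apply the $2 \times 2$ minor bound on pairs of rows in $F$. For any $r, s \in F$ and any two columns $A_p, A_q \in \{A_i, A_j, A_k\}$, since $t_r = t_s = 0$, the corresponding $2 \times 2$ submatrix of $t+A$ has determinant $A_{p,r}A_{q,s} - A_{q,r}A_{p,s}$; each summand is $\pm 1$, so the determinant lies in $\{-2,0,2\}$, and the bound forces it to be $0$. This yields the identity
\[
A_{p,r}\, A_{p,s} \;=\; A_{q,r}\, A_{q,s} \qquad \text{for all } r,s \in F,
\]
which means that the restrictions $A_p|_F$ and $A_q|_F$ agree up to a global sign flip in $\{-1,+1\}^F$.

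Finally I run a pigeonhole on the three $\pm 1$-vectors $A_i|_F,\,A_j|_F,\,A_k|_F$. Fixing $A_i|_F$, the other two are $\varepsilon_{ij} A_i|_F$ and $\varepsilon_{ik} A_i|_F$ for some signs $\varepsilon_{ij}, \varepsilon_{ik} \in \{-1,+1\}$. As only two sign choices are available, at least two of the three vectors coincide on $F$; combined with their agreement on $E \setminus F$ and the fact that they all vanish off $E$, two of the columns $A_i, A_j, A_k$ must be identical, a contradiction. The only delicate point is the sign asymmetry in the first step — it is crucial that $t \ge 0$ (so that only the value $+1$, not $-1$, is excluded), which is exactly why the definition of $\hs$ places the translation vector in $[0,1)^m$ rather than in a symmetric box.
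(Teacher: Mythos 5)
Your proof is correct, and it rests on the same two ingredients as the paper's argument -- the $1\times 1$ minors together with $t\ge 0$ to control the rows where $t_r>0$, and vanishing $2\times 2$ minors in rows where $t_r=0$ -- but it organizes them differently. The paper first locates two rows $r,s\in E$ in each of which the three columns contain an entry $+1$ (so $t_r=t_s=0$), and then runs a case analysis: either two of the columns are linearly independent on $\{r,s\}$ (giving a $\pm2$ minor), or all three are parallel there, which forces $\card{E}\ge 3$ and a further row $\ell$ with $t_\ell=0$ producing a $\pm2$ minor. You instead split $E$ into $F=\{r\in E : t_r=0\}$ and its complement, observe that all three columns are identically $-1$ on $E\setminus F$, show via the vanishing $2\times2$ minors that the restrictions to $F$ pairwise agree up to a global sign, and finish by pigeonhole on the two available signs. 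This eliminates the paper's case distinctions (including the $\card{E}=2$ versus $\card{E}\ge3$ split) at essentially no cost; the only points worth making explicit are that $F\neq\emptyset$ (otherwise the three columns would already coincide), that when $\card{F}=1$ the ``global sign'' relation is vacuous as stated but the pigeonhole on three $\pm1$ scalars still closes the argument, and that -- exactly as in the paper's own proof -- the statement implicitly assumes the columns of $A$ are pairwise distinct, which is what you use when you take three distinct columns sharing the support $E$.
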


\begin{proof}
Observe that in view of the condition $\Delta_1(t+A) \leq 1$ and the assumption that $t_i \geq 0$, for every $i \in [m]$, we must have $t_r = 0$, as soon as there is an entry equal to~$1$ in the $r$th row of~$A$.

Now, assume to the contrary that there are three columns $A_i,A_j,A_k$ of~$A$ having the same support $E \in \cE_A$.
Then, clearly $\card{E} \geq 2$ and the restriction of the matrix~$(A_i,A_j,A_k) \in \{-1,0,1\}^{m \times 3}$ to the rows indexed by~$E$ is a $\pm 1$-matrix.
Also observe that there must be two rows $r,s \in E$ so that~$(A_i,A_j,A_k)$ contains an entry equal to~$1$ in both of these rows.
Indeed, if there is at most one such row, then the columns~$A_i$, $A_j$, $A_k$ cannot be pairwise distinct.
Therefore, we necessarily have $t_r = t_s = 0$.
Now, there are two options.
Either two of the columns $A_i,A_j,A_k$ are such that their restriction to the rows~$r,s$ give linearly independent $\pm 1$-vectors.
This however would yield a $2 \times 2$ submatrix of $t+A$ with minor $\pm 2$, contradicting that $\Delta_2(t+A) \leq 1$.
In the other case, the restriction of the three columns to the rows~$r,s$ has the form $\pm{\tiny\begin{bmatrix}1 & 1 & 1 \\ 1 & 1 & 1 \end{bmatrix}}$ or $\pm{\tiny\begin{bmatrix}1 & 1 & -1 \\ 1 & 1 & -1\end{bmatrix}}$, up to permutation of the indices~$i,j,k$.
If $\card{E} = 2$, then this cannot happen as~$A$ is assumed to have pairwise distinct columns.
So, $\card{E} \geq 3$, and considering the columns, say $A_i,A_j$, which agree in the rows~$r,s$, there must be another index $\ell \in E \setminus \{r,s\}$ such that $(A_i)_\ell = 1$ and $(A_j)_\ell = -1$, or vice versa.
In any case this means that also $t_\ell = 0$ and that there is a $2 \times 2$ submatrix of~$t+A$ in the rows~$r,\ell$ consisting of linearly independent $\pm 1$-vectors.
Again this contradicts that $\Delta_2(t+A) \leq 1$, and thus proves the claim.
\end{proof}

As mentioned above, this observation on the supports allows to use \emph{Sauer's Lemma} from extremal set theory which we state for the reader's convenience.\footnote{Sauer's Lemma was used already by Glanzer, Weismantel \& Zenklusen~\cite[Lem.~3.4]{glanzerweismantelzenklusen2018onthenumber} for the sake of bounding the number of columns in a $\Delta$-submodular matrix.}
It was independently published by Sauer~\cite{sauer1972onthedensity} and Shelah~\cite{shelah1972acombinatorial} (who also credits M.~Perles) in 1972, and again independently by Vapnik \& Chervonenkis~\cite{vapnikchervonenkis1971onthe} a few years earlier.

\begin{lemma}
\label{lem:sauer-lemma}
Let $m,k \in \Z_{>0}$ be such that $m > k$.
If $\cE \subseteq 2^{[m]}$ is such that
$
\card{\cE} > \binom{m}{0} + \binom{m}{1} + \ldots + \binom{m}{k},
$
then there is a subset $Y \subseteq [m]$ with $k+1$ elements that is \emph{shattered by~$\cE$}, meaning that $\left\{ E \cap Y : E \in \cE \right\} = 2^Y$.
\end{lemma}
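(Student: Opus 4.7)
The plan is to deduce \Cref{lem:sauer-lemma} from the following quantitative strengthening, commonly attributed to Pajor: for every family $\cF \subseteq 2^{[m]}$, the number of subsets of $[m]$ shattered by $\cF$ is at least $\card{\cF}$. Once this is established, Sauer's Lemma follows immediately by contraposition: if $\cE$ did not shatter any subset of size $k+1$ or larger, then all sets shattered by $\cE$ would have cardinality at most $k$, so their total number would be at most $\binom{m}{0} + \binom{m}{1} + \ldots + \binom{m}{k}$, contradicting Pajor together with the assumed lower bound on $\card{\cE}$.

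To prove Pajor's statement, I would induct on the ground set size $m$. The base case $m = 0$ is trivial since $\cF \subseteq \{\emptyset\}$ and $\emptyset$ is shattered by any nonempty family. For the inductive step, fix an element, say $m \in [m]$, and split $\cF$ into the sets not containing $m$ and those containing $m$. From this splitting, define the two auxiliary families
\[
\cF_1 := \bigl\{ E \setminus \{m\} : E \in \cF \bigr\} \subseteq 2^{[m-1]} \quad\text{and}\quad \cF_2 := \bigl\{ E \subseteq [m-1] : E \in \cF \ \text{and}\ E \cup \{m\} \in \cF \bigr\},
\]
and verify the identity $\card{\cF} = \card{\cF_1} + \card{\cF_2}$, which amounts to the observation that pairs $(E, E \cup \{m\})$ both lying in $\cF$ are counted once by $\cF_1$ and once by $\cF_2$, while elements appearing only on one side of $m$ are counted once by $\cF_1$ alone.

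Apply the induction hypothesis on the ground set $[m-1]$ to $\cF_1$ and to $\cF_2$, yielding collections $\cS_1$ and $\cS_2$ of shattered subsets of $[m-1]$ with $\card{\cS_j} \geq \card{\cF_j}$ for $j \in \{1,2\}$. The key step is to exhibit an injection from $\cS_1 \sqcup \cS_2$ into the collection of subsets of $[m]$ shattered by $\cF$: send $Y \in \cS_1$ to $Y$ itself (shattering is inherited because every pattern on $Y$ achievable by some $E \setminus \{m\}$ is achievable by $E \cap Y$ with $E \in \cF$) and send $Y \in \cS_2$ to $Y \cup \{m\}$ (by the definition of $\cF_2$, pairs $(E, E \cup \{m\}) \in \cF \times \cF$ realize both possibilities for the coordinate $m$ on top of any prescribed pattern on $Y$). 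Since the images are disjoint (the first lies in $2^{[m-1]}$, the second contains $m$), summing gives that $\cF$ shatters at least $\card{\cF_1} + \card{\cF_2} = \card{\cF}$ subsets of $[m]$, closing the induction.

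The only genuinely delicate step is the correct verification that $\cF_2$-shattering of $Y \subseteq [m-1]$ transfers to $\cF$-shattering of $Y \cup \{m\}$; the rest is bookkeeping. I therefore expect this bit of the inheritance argument to be the main place where care is required, although it ultimately reduces to unwinding the definition of $\cF_2$.
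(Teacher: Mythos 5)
Your proposal is correct, but it is worth noting that the paper does not prove \Cref{lem:sauer-lemma} at all: it is quoted as a known result with references to Sauer, Shelah, and Vapnik--Chervonenkis, so any comparison is with the classical literature rather than with an in-paper argument. Your route goes through Pajor's strengthening (the family $\cF$ shatters at least $\card{\cF}$ subsets), proved by induction on the ground set with the decomposition into $\cF_1$ and $\cF_2$; the counting identity $\card{\cF}=\card{\cF_1}+\card{\cF_2}$, the inheritance of shattering from $\cF_1$ to $\cF$, the transfer from $\cF_2$-shattering of $Y$ to $\cF$-shattering of $Y\cup\{m\}$ via the pairs $(E,E\cup\{m\})$, and the disjointness of the two image collections are all handled correctly, so the induction closes. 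This buys you a self-contained and in fact stronger statement than the cardinality bound, whereas the classical inductive proof of Sauer--Shelah establishes only the bound $\card{\cE}\le\sum_{i=0}^{k}\binom{m}{i}$ (using essentially the same split into "sets counted once" and "sets counted twice") and other standard proofs use shifting or the polynomial method. One small step you should make explicit: your contraposition yields a shattered set $Y$ of size \emph{at least} $k+1$, while the lemma asks for a shattered set with exactly $k+1$ elements; this follows from the hereditary nature of shattering (if $\cE$ shatters $Y$ and $Y'\subseteq Y$, then for $Z\subseteq Y'$ any $E\in\cE$ with $E\cap Y=Z$ satisfies $E\cap Y'=Z$), a one-line remark that should be added. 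Likewise, the contradiction uses that the number of subsets of $[m]$ of cardinality at most $k$ is exactly $\binom{m}{0}+\cdots+\binom{m}{k}$, which is implicit but harmless.
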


Now, the strategy to bounding the number of columns in a matrix $A \in \{-1,0,1\}^{m \times n}$ such that $t+A$ is totally $1$-submodular for some $t \in [0,1)^m$ is to use the inequality $|\cE_A| \geq \frac12 n$, which holds by \Cref{prop:shifted-heller-supports}, and then to argue by contradiction.
Indeed, if $n > 2 \sum_{i=0}^{k-1} \binom{m}{i}$, then by Sauer's Lemma there would be a $k$-element subset $Y \subseteq [m]$ that is shattered by~$\cE_A$.
In terms of the matrix~$A$, this means that (possibly after permuting rows or columns) it contains a submatrix of size $k \times 2^k$ which has exactly one column for each of the~$2^k$ possible supports and where in each column the non-zero entries are chosen arbitrarily from~$\{-1,1\}$.
For convenience we call any such matrix a \emph{Sauer Matrix} of size~$k$.
For concreteness, a Sauer Matrix of size~$3$ is of the form
\[
\left[\!\begin{array}{*8{c}}
0 & \pm 1 & 0 & 0 & \pm 1 & \pm1 & 0 & \pm 1 \\
0 & 0 & \pm 1 & 0 & \pm 1 & 0 & \pm 1 & \pm 1 \\
0 & 0 & 0 & \pm 1 & 0 & \pm 1 & \pm 1 & \pm 1
\end{array}\!\right],
\]
for any choice of signs.

The combinatorial proof of $\h(1,m) = m^2+m+1$ is based on the fact that
\begin{align}
\textrm{No Sauer Matrix of size } 3 \textrm{ is totally } 1\textrm{-submodular.}\label{eqn:no-TU-Sauer-Matrix-size-3}
\end{align}
This is discussed in Schrijver~\cite[\S 21.3]{schrijver1986theory}, Bixby \& Cunningham~\cite{bixbycunningham1987short}, and Tutte~\cite{tutte1958ahomotopy}, and also implicitly in the analysis of the first equation on page 1361 of Heller's paper~\cite{heller1957onlinear}.
In order to extend this kind of argument to the (refinded) shifted setting, we need some more notation.

\begin{definition}
\label{def:feasible-sauer-matrices}
Let $S$ be a Sauer Matrix of size~$k$.
We say that a vector $r \in [0,1)^k$ is \emph{feasible for~$S$} if $r+S$ is totally $1$-submodular.
Further, we say that~$S$ is \emph{feasible for translations} if there exists a vector $r \in [0,1)^k$ that is feasible for~$S$, and otherwise we say that~$S$ is \emph{infeasible for translations}.

Moreover, the Sauer Matrix~$S$ is said to be \emph{of type $(s,k-s)$}, if there are exactly~$s$ rows in~$S$ that contain at least one entry equal to~$1$.
\end{definition}
Note that there is (up to permuting rows or columns) only one Sauer Matrix of type~$(0,k)$.
As feasibility of a Sauer Matrix of type $(s,k-s)$ is invariant under permuting rows, we always make the assumption that each of its first~$s$ rows contains an entry equal to~$1$.

\begin{proposition}
\label{prop:sauer-matrix-condition}
Let $m,k,\Delta \in \Z_{>0}$ be such that $m > k$.
\begin{enumerate}[label=(\roman*)]
 \item If no Sauer Matrix of size~$k$ is feasible for translations, then
\[
\hs(m) \leq 2 \cdot \sum_{i=0}^{k-1} \binom{m}{i} \in \cO(m^{k-1}).
\]
 
 \item  If no Sauer Matrix~$S$ of size~$k$ admits a translation vector $t \in [0,1)^k \cap (\frac{1}{\Delta} \Z)^k \setminus \{\zero\}$ such that $t+S$ is totally $1$-submodular, then
\[
\hsd{\Delta}{m} \leq 2 \cdot \sum_{i=0}^{k-1} \binom{m}{i} \in \cO(m^{k-1}).
\]
\end{enumerate}
\end{proposition}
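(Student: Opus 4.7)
The plan is to argue by contradiction along the lines of the short combinatorial proof of Heller's bound $\h(1,m) = m^2+m+1$ recalled at the start of this section, combining \Cref{prop:shifted-heller-supports} with Sauer's Lemma (\Cref{lem:sauer-lemma}).

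Suppose the bound fails, and fix witnesses: a matrix $A \in \{-1,0,1\}^{m \times n}$ with $n > 2 \sum_{i=0}^{k-1}\binom{m}{i}$ pairwise distinct columns together with a translation $t \in [0,1)^m \setminus \{\zero\}$ (with $t \in (\tfrac{1}{\Delta}\Z)^m$ in part (ii)) such that $t+A$ is totally $1$-submodular. By \Cref{prop:shifted-heller-supports}, every support in $\cE_A$ is realized by at most two columns, so $|\cE_A| \geq n/2 > \sum_{i=0}^{k-1}\binom{m}{i}$, and \Cref{lem:sauer-lemma} produces a $k$-element subset $Y \subseteq [m]$ shattered by $\cE_A$. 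For every $Z \subseteq Y$, pick a column $A_{i_Z}$ of $A$ with $\supp(A_{i_Z}) \cap Y = Z$; restricting these $2^k$ columns to the rows indexed by $Y$ yields a matrix which, up to permuting rows and columns, is a Sauer matrix $S$ of size $k$. Now set $r := t|_Y$: then $r \in [0,1)^k$ (and $r \in (\tfrac{1}{\Delta}\Z)^k$ in part (ii)), and since $r+S$ is a submatrix of $t+A$, every minor of $r+S$ is bounded by $1$ in absolute value, meaning $r+S$ is totally $1$-submodular.

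For part (i) this directly contradicts the hypothesis that no Sauer matrix of size $k$ is feasible for any translation in $[0,1)^k$, and the proof concludes. For part (ii), the contradiction additionally requires $r \neq \zero$, which is the main subtlety. If instead $r = \zero$, then $S$ itself would be totally $1$-submodular; but any Sauer matrix of size $k \geq 3$ contains a Sauer matrix of size $3$ as a submatrix (select any three rows and the eight columns whose supports lie within those rows), so this would contradict the classical fact~\eqref{eqn:no-TU-Sauer-Matrix-size-3} that no Sauer matrix of size~$3$ is totally $1$-submodular. Since the hypothesis of (ii) is easily seen to be vacuous for $k \leq 2$ (a Sauer matrix of size $\leq 2$ always admits a non-zero $\Delta$-rational translation by placing an appropriate fraction in a row whose non-zero entries are all $-1$), we may assume $k \geq 3$, forcing $r \neq \zero$ and closing the argument.
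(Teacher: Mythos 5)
Your proof is correct and follows essentially the same route as the paper: argue by contradiction, use \Cref{prop:shifted-heller-supports} to get $\card{\cE_A} \geq n/2$, apply Sauer's Lemma to extract a Sauer Matrix $S$ of size $k$ as a submatrix of $A$, and restrict $t$ to the shattered rows to contradict the hypothesis. Your additional care in part (ii) about the possibility $t|_Y = \zero$ (handled via~\eqref{eqn:no-TU-Sauer-Matrix-size-3} for $k \geq 3$, with the hypothesis failing for $k \leq 2$ and $\Delta \geq 2$) is a sound refinement of the paper's terse ``analogous'' remark; just note that for the $k \leq 2$ case only the \emph{existence} of one Sauer Matrix admitting a non-zero $\Delta$-rational translation is needed (e.g.\ the one with all non-zero entries equal to $-1$), so the word ``always'' overstates what is true and what is required.
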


\begin{proof}
(i): Assume for contradiction that there is a matrix $A \in \{-1,0,1\}^{m \times n}$ and a translation vector $t \in [0,1)^m$ such that $t+A$ is totally $1$-submodular and $n > 2 \sum_{i=0}^{k-1} \binom{m}{i}$.
By Proposition~\ref{prop:shifted-heller-supports}, we have $|\cE_A| \geq \frac12 n > \sum_{i=0}^{k-1} \binom{m}{i}$ and thus by Sauer's Lemma (up to permuting rows or columns) the matrix~$A$ has a Sauer Matrix~$S$ of size~$k$ as a submatrix.
Writing $r \in [0,1)^k$ for the restriction of~$t$ to the~$k$ rows of~$A$ in which we find the Sauer Matrix~$S$, we get that by the total $1$-submodularity of $t+A$, the matrix $r+S$ necessarily must be totally $1$-submodular as well.
This however contradicts the assumption.

(ii): The argument is analogous to the one given for part (i).
\end{proof}

In contrast to the unshifted setting, for the sizes~$3$ and~$4$, there are Sauer Matrices~$S$ and vectors~$r$, such that $r+S$ is totally $1$-submodular.
Consider, for instance, for size~$3$ the pair
\[
S =
{\small
\left[\!\begin{array}{*{8}r}
0 & -1 &  0 &  0 & -1 & -1 &  0 & -1 \\
0 &  0 & -1 &  0 & -1 &  0 & -1 & -1 \\
0 &  0 &  0 & -1 &  0 & -1 & -1 & -1
\end{array}\!\right]}
\,,\,
r = {\small
\begin{bmatrix}
1/2 \\
1/2 \\
1/2
\end{bmatrix}}
,
\]
and, for size~$4$ the pair
\[
S =
{\tiny
\left[\!\begin{array}{*{16}r}
0 & -1 &  0 &  0 &  0 & -1 & -1 & -1 &  0 &  0 &  0 & -1 & -1 & -1 &  0 & -1 \\
0 &  0 & -1 &  0 &  0 & -1 &  0 &  0 & -1 & -1 &  0 & -1 & -1 &  0 & -1 & -1 \\
0 &  0 &  0 & -1 &  0 &  0 & -1 &  0 & -1 &  0 & -1 & -1 &  0 & -1 & -1 & -1 \\
0 &  0 &  0 &  0 & -1 &  0 &  0 & -1 &  0 & -1 & -1 &  0 & -1 & -1 & -1 & -1
\end{array}\!\right]}
\,,\,
\]
and $r = (\frac12,\frac12,\frac12,\frac12)^\intercal$.
In both cases, $2(r+S)$ is a matrix all of whose entries are either~$1$ or~$-1$.
By Hadamard's inequality, the determinant of any $\pm 1$-matrix of size~$k \leq 4$ is at most~$2^k$, and thus $\Delta_k(r+S) \leq 1$ for all $k \leq 4$, in the two examples above.

Our aim is to show that this pattern does not extend to higher dimensions.
In particular, we prove that no Sauer Matrix of size~$5$ is feasible for translations, which leads to \Cref{thm:polynomial-linear}~(\romannumeral1), and that feasible vectors for Sauer Matrices of size~$4$ must have all its entries in $\{0,\frac12\}$ leading to \Cref{thm:polynomial-linear}~(\romannumeral2).
The proof requires a more detailed study of Sauer Matrices of special types and sizes~$4$ and~$5$.

\begin{proposition}
\label{prop:Sauer-size-4}
Let $S$ be a Sauer Matrix of size~$4$ and let $r \in [0,1)^4$ be feasible for~$S$.
\begin{enumerate}[label=(\roman*)]
 \item If $S$ is of type~$(0,4)$, then $r = (\frac12,\frac12,\frac12,\frac12)^\intercal$.
 \item If $S$ is of type~$(1,3)$, then $r = (0,\frac12,\frac12,\frac12)^\intercal$.
 \item If $S$ is of type~$(2,2)$, then $r = (0,0,\frac12,\frac12)^\intercal$.
 \item If $S$ is of type~$(3,1)$ or~$(4,0)$, then~$S$ is infeasible for translations.
\end{enumerate}
\end{proposition}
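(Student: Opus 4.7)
The plan is to carry out a case analysis on the type $(s, 4-s)$ of $S$. The running observation, used throughout, is that the constraint $\Delta_1(r+S) \leq 1$ combined with $r \in [0,1)^4$ forces $r_i = 0$ whenever row $i$ of $S$ contains an entry equal to $+1$; in particular, for a Sauer Matrix of type $(s, 4-s)$ one has $r_1 = \cdots = r_s = 0$ after permuting rows.

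For part (iv) the conclusion is then a direct contradiction: when $s \geq 3$, the restriction of $S$ to rows $\{1, 2, 3\}$ and to the eight columns whose support is contained in $\{1, 2, 3\}$ is itself a size-$3$ Sauer Matrix, and since the shift on these rows is zero it would have to be totally $1$-submodular, contradicting~\eqref{eqn:no-TU-Sauer-Matrix-size-3}.

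For parts (i)--(iii), the strategy is to pin down the remaining ``free'' coordinates of $r$ by combining two complementary families of $4 \times 4$ minors of $r+S$. \emph{Star minors:} For each free index $i \in [4]$, choose four columns of $S$ whose supports are contained in $[4] \setminus \{i\}$; then row $i$ of the resulting $4\times 4$ submatrix is $r_i \one^\intercal$, and after a suitable column operation, cofactor expansion along row $i$ reduces the determinant to $\pm 2 r_i$ (the factor $2$ being the determinant of the $3 \times 3$ off-diagonal all-ones matrix), yielding $r_i \leq \tfrac{1}{2}$. \emph{Complement minor:} Using the four columns whose supports are the three-element sets $[4] \setminus \{i\}$, $i \in [4]$, the shifted submatrix takes the form $r \one^\intercal - N$ with $N$ the $4 \times 4$ matrix having zeros on the diagonal and ones elsewhere; an application of the matrix determinant lemma, together with $N^{-1} = \tfrac{1}{3}(N+I) - I$, gives $\det = \sum_i r_i - 3$, hence $\sum_i r_i \geq 2$. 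In part~(i) the upper bounds $r_i \leq \tfrac{1}{2}$ sum to at most $2$, so combined with $\sum_i r_i \geq 2$ they force $r_i = \tfrac{1}{2}$ for every $i$. Parts (ii) and (iii) follow by the same argument applied to the free coordinates only, where the $\pm 1$ entries in rows $1$ (resp.\ rows $1, 2$) are absorbed into a $\pm 1$ sign factor during the cofactor expansion.

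\textbf{Main obstacle.} The principal technical difficulty in parts (ii) and (iii) is that the type condition only asserts the \emph{existence} of $+1$ entries in the zeroed rows and does not fix the signs of the remaining entries of those rows: for some sign patterns, the star or complement minor constructed above collapses to a trivial inequality and thus carries no information. One has to check, by combining several minors of the same shape and exploiting the freedom to relabel supports, that for every admissible sign distribution consistent with feasibility both bounds $r_i \leq \tfrac{1}{2}$ and $\sum_i r_i \geq 2$ can nevertheless be extracted; certain ``bad'' sign patterns will turn out to be infeasible altogether, which is consistent with the statement as it merely characterises the shape of $r$ when feasibility holds. This sign-pattern bookkeeping is the central non-trivial step of the proof.
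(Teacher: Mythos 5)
Your arguments for parts (i) and (iv) are correct and essentially coincide with the paper's: part (iv) is exactly the paper's reduction to a totally $1$-submodular size-$3$ Sauer Matrix, contradicting \eqref{eqn:no-TU-Sauer-Matrix-size-3}, and your part (i) uses the same ``star'' minor as the paper's matrix $M$ for $r_i \leq \tfrac12$, replacing the paper's matrix $N$ by the four weight-three columns and the matrix determinant lemma to get $\sum_i r_i \geq 2$; both computations check out.

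The gap is in parts (ii) and (iii), and it is not a small one: what you label the ``main obstacle'' is precisely the content of the paper's proof, and you do not carry it out. Once $r_1=0$ (resp.\ $r_1=r_2=0$) is fixed, the sign pattern of the $+1$ entries in the zeroed rows genuinely changes which minors carry information. For instance, in type $(2,2)$ the star-minor cofactor for a free row $i$ has the form $\epsilon_1\epsilon_4+\epsilon_2\epsilon_3$ for signs $\epsilon_j\in\{\pm1\}$ and can vanish, and it is not established that some relabelled choice of columns always recovers both $r_i\leq\tfrac12$ and the appropriate sum bound (which, note, must be $\geq\tfrac32$ in (ii) and $\geq 1$ in (iii), not $\geq 2$ as written). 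Moreover, several sign patterns are infeasible altogether, and certifying that also requires exhibiting explicit violated minors; you assert this will ``turn out'' but give no construction. The paper's proof of (ii) is a four-case analysis over the signs of the first-row entries of the weight-two columns, each case settled by concretely exhibited $4\times4$ submatrices, and its proof of (iii) additionally relies on the full characterization of feasible translations for size-$3$ Sauer Matrices of type $(1,2)$ (\Cref{prop:Sauer-size-3-type-1-2}), which shows that for the special patterns $(+,-,+,-)$ and $(+,+,-,-)$ the size-$3$ subproblem admits a one-parameter segment of feasible vectors, so that a further search for the right $4\times4$ minors (the matrices built from $R_1,\dots,R_6$) is needed to cut the segment down to $(0,0,\tfrac12,\tfrac12)^\intercal$. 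This shows the issue is not routine bookkeeping around one fixed minor shape; as it stands, your proposal proves (i) and (iv) but only outlines a strategy for (ii) and (iii).
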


\begin{proposition}
\label{prop:sauer-matrix-infeasibility-size-5}
There does not exist a Sauer Matrix $S$ of size~$5$ and a translation vector $r \in [0,1)^5$ such that $r+S$ is totally $1$-submodular.
\end{proposition}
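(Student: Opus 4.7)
The plan is to proceed by contradiction: assume $S$ is a Sauer Matrix of size $5$ and $r \in [0,1)^5$ makes $r + S$ totally $1$-submodular, and derive a contradiction by combining Proposition~\ref{prop:Sauer-size-4} applied to every $4$-row restriction with an explicit $5 \times 5$ determinant computation.

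First I would apply Proposition~\ref{prop:Sauer-size-4} to the size-$4$ Sauer Matrix obtained from $S$ by keeping the rows indexed by $Y$ and the $16$ columns of $S$ supported on $Y$, for every $4$-element subset $Y \subseteq [5]$. Parts (i)--(iv) force $r|_Y \in \{0, \tfrac{1}{2}\}^4$ with at most two zero coordinates, so since $Y$ is arbitrary the full vector $r$ lies in $\{0, \tfrac{1}{2}\}^5$ and has at most two zero coordinates overall. Setting $Z := \{i : r_i = \tfrac{1}{2}\}$, this reduces the problem to $|Z| \in \{3, 4, 5\}$. The type classification moreover forces $S_{i, E} = -1$ for every $i \in Z$ with $i \in E$ and $|E| \leq 4$, because any such row $i$ must be a non-$+1$-row of each size-$4$ restriction it belongs to; consequently the only free $\pm 1$-choices in $S$ are the entries $S_{i, E}$ with $i \in Z^c$ in columns of support size at most $4$, together with the signs of the single full-support column $E = [5]$.

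In the case $|Z| = 5$ this determines $S$ completely on the columns of support size at most $4$. The $5 \times 5$ submatrix of $r + S$ consisting of the columns of support $[5] \setminus \{i\}$, $i = 1, \dots, 5$, then coincides with $I - \tfrac{1}{2} J$ (the matrix with diagonal entries $\tfrac{1}{2}$ and off-diagonal entries $-\tfrac{1}{2}$), whose determinant equals $1 - \tfrac{5}{2} = -\tfrac{3}{2}$; this already contradicts $\Delta_5(r + S) \leq 1$. For $|Z| \in \{3, 4\}$, the same family of columns still produces a $5 \times 5$ submatrix of $r + S$ whose determinant is a linear function of the free $\pm 1$-signs in the rows indexed by $Z^c$; for instance, when $|Z| = 4$ and $r_1 = 0$, it equals $\tfrac{1}{2} \sum_{j=2}^5 \varepsilon_j$ with $\varepsilon_j := S_{1, [5] \setminus \{j\}} \in \{\pm 1\}$. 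Combining the resulting determinant inequality with analogous inequalities from $5 \times 5$ submatrices built using columns of smaller support sizes (such as the size-$1$ columns $\{i\}$ and the size-$2$ columns $\{i, j\}$ with $i \in Z^c$) yields a finite system of linear $\pm 1$-inequalities which a finite check shows admits no solution.

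The main obstacle is exactly this final step for $|Z| \in \{3, 4\}$: unlike the clean single-minor argument for $|Z| = 5$, the free signs in rows of $Z^c$ and in the unrestricted full-support column leave enough degrees of freedom that no single $5 \times 5$ minor rules out every admissible sign pattern. The delicate combinatorial step will be to select a sufficient family of $5 \times 5$ minors of $r + S$ so that the resulting system of $\pm 1$-inequalities becomes infeasible.
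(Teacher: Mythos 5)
Your reduction is sound as far as it goes: applying \Cref{prop:Sauer-size-4} to the sixteen columns supported on each $4$-element row set does force $r\in\{0,\tfrac12\}^5$ with at most two zero entries, and your computation for $Z=[5]$ is correct — the $5\times5$ submatrix of $r+S$ on the five columns of support size four has determinant $-\tfrac32$, which kills the type-$(0,5)$ case (the paper dispatches that case just as quickly with two $5\times5$ minors giving $r_1\le\tfrac13$ and $r_1\ge\tfrac23$). One small inaccuracy: the entries of the full-support column in rows $i\in Z$ are not free, since $\Delta_1(r+S)\le 1$ already forces every entry of a row with $r_i=\tfrac12$ to lie in $\{0,-1\}$; this only shrinks your search space, so it is harmless.

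The genuine gap is the pair of cases $|Z|\in\{3,4\}$ (the paper's types $(1,4)$ and $(2,3)$), which is where the entire difficulty of the proposition sits and which you leave open. Your formula $\det=\tfrac12\sum_{j=2}^5\varepsilon_j$ for $|Z|=4$ is correct, but as you note it only excludes the sign patterns with all $\varepsilon_j$ equal; you then assert that adjoining minors coming from columns of smaller support yields an infeasible system of $\pm1$-inequalities, while conceding in the same breath that you have not determined which minors to take. That selection is not a routine afterthought — it is precisely the content of \Cref{prop:negative-Sauer-size-5}(ii) and (iii): for type $(1,4)$ the paper shows that each of four explicit $5\times5$ submatrices (built from the full-support column, a column of support $\{1,i\}$ with first entry $-1$, and three singleton columns) would have minor of absolute value $\tfrac32$, so by the Sauer property every column of support $\{1,i\}$ must carry a $+1$ in the first row, and the resulting submatrix $M'$ then has minor of absolute value $2$; for type $(2,3)$ a four-way case distinction on the signs in the two rows containing a $+1$ produces in every case an explicit submatrix with minor $\tfrac32$. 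Without exhibiting such a family of minors (or actually carrying out the finite but exponentially large sign check you allude to), the argument does not establish the statement for $|Z|=3,4$, and hence does not prove the proposition.
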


\noindent The proof of these statements is based on identifying certain full-rank submatrices of the respective Sauer Matrix for which the minor condition provides a strong obstruction for feasibility.
The technical details are given in \Cref{sec:sauer-matrix-proof}.

With these preparations we are now able to prove our main result.

\begin{proof}[{\Cref{thm:polynomial-linear}}]
(i): In view of \Cref{lem:residue-class-approach}, we have $\h(\Delta,m) \leq \h(1,m) + (\Delta - 1) \cdot \hs(m)$.
The claimed bound now follows by Heller's identity $\h(1,m) = m^2 + m + 1$ and the fact that $\hs(m) \leq 2 \sum_{i=0}^4 \binom{m}{i}$, which holds by combining \Cref{prop:sauer-matrix-condition}~(\romannumeral1) and \Cref{prop:sauer-matrix-infeasibility-size-5}.

(ii): Again, in view of \Cref{lem:residue-class-approach}, we have $\h(\Delta,m) \leq \h(1,m) + (\Delta - 1) \cdot \hsd{\Delta}{m}$.
By \Cref{prop:Sauer-size-4}, for every Sauer Matrix~$S$ of size~$4$, any vector $t \in [0,1)^4$ such that $t+S$ is totally $1$-submodular has all its coordinates in~$\{0,\frac12\}$.
If $\Delta$ is odd, then using \Cref{prop:coefficients}, such translation vectors are excluded from the definition of $\hsd{\Delta}{m}$.
Thus, by \Cref{prop:sauer-matrix-condition}~(\romannumeral2), we get $\hsd{\Delta}{m} \leq 2 \sum_{i=0}^3 \binom{m}{i}$, which together with Heller's identity $\h(1,m) = m^2 + m + 1$ proves the claimed bound.
\end{proof}

\section{Feasibility of Sauer Matrices in Low Dimensions}
\label{sec:sauer-matrix-proof}

Here, we complete the discussion from the previous section and give the technical details and the proof of \Cref{prop:Sauer-size-4,prop:sauer-matrix-infeasibility-size-5}.
Parts of the arguments are based on the fact that the condition $\card{\det(r+M)} \leq 1$, for any $M \in \R^{k \times k}$, is equivalent to a pair of linear inequalities in the coordinates of~$r \in \R^k$.
This turns the question on whether a given Sauer Matrix is feasible for translations into the question of whether an associated polyhedron is non-empty.
To this end, let $S \in \{-1,0,1\}^{k \times 2^k}$ be a Sauer Matrix of size~$k$ and consider the set
\[
\cP(S) = \left\{ r \in [0,1]^k : \Delta_\ell(r+S) \leq 1 \textrm{ for each } 1 \leq \ell \leq k \right\}
\]
of feasible vectors for~$S$ in the unit cube $[0,1]^k$.

\begin{proposition}
\label{prop:polyhedrality-feasible-set}
For every Sauer Matrix~$S$, the set $\cP(S)$ is a polytope.
\end{proposition}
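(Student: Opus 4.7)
The plan is to verify that $\cP(S)$ is cut out from the unit cube $[0,1]^k$ by finitely many linear inequalities in the coordinates of $r$. Since $[0,1]^k$ is a bounded polyhedron, this will immediately imply that $\cP(S)$ is a polytope. Given that there are only finitely many square submatrices $M$ of $r+S$ (indexed by a choice of row set $I \subseteq [k]$ and column set $J \subseteq [2^k]$ with $|I|=|J|$), it suffices to show that for each such $M$, the condition $|\det(M)| \leq 1$ amounts to a pair of linear inequalities $-1 \leq \det(M) \leq 1$ in the entries of $r$. In other words, I want to prove that $\det(M)$ is an affine-linear function of $r$.

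The key observation is that each row of $r+S$ has the structure $(r+S)_{i,\cdot} = r_i \cdot \one^\intercal + S_{i,\cdot}$, so the $i$th row of the submatrix $M$ with row set $I$ and column set $J$ reads $r_i \cdot \one^\intercal + S_{i,J}$. Applying multilinearity of the determinant in the rows, I would expand
\[
\det(M) = \sum_{T \subseteq I} \det\bigl(M_T\bigr),
\]
where $M_T$ is the matrix whose $i$th row equals $r_i \cdot \one^\intercal$ if $i \in T$, and equals $S_{i,J}$ if $i \in I \setminus T$.

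The crucial point is that whenever $|T| \geq 2$, the matrix $M_T$ has two rows that are each scalar multiples of the all-ones vector $\one^\intercal$, hence is rank-deficient and contributes $\det(M_T) = 0$. Therefore only the terms with $|T| \in \{0,1\}$ survive, yielding
\[
\det(M) = \det(S|_{I,J}) + \sum_{i \in I} r_i \cdot c_i(I,J),
\]
where the coefficient $c_i(I,J)$ is the determinant of the matrix obtained from $S|_{I,J}$ by replacing the $i$th row with $\one^\intercal$. This expression is manifestly affine-linear in $r$, so the condition $|\det(M)| \leq 1$ translates into two linear inequalities.

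Putting everything together, $\cP(S)$ is the intersection of the polytope $[0,1]^k$ with the finitely many halfspaces arising from the conditions $-1 \leq \det(M) \leq 1$ over all square submatrices $M$ of $r+S$. Hence $\cP(S)$ is a bounded polyhedron, i.e.~a polytope. I do not expect any step to present a real obstacle; the main point is simply to recognize that the determinant, which a priori is a polynomial of degree up to $k$ in $r$, collapses to an affine function because all higher-order terms carry a factor of $\det$ of a matrix with two proportional rows.
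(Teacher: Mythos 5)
Your proposal is correct and follows essentially the same route as the paper: both reduce the claim to the observation that each condition $\card{\det(r_I+S_{I,J})} \leq 1$ is a pair of linear inequalities in $r$, because $\det(r_I+S_{I,J})$ is affine-linear in $r$, so that $\cP(S)$ is a finite intersection of halfspaces inside the cube $[0,1]^k$. The only (minor) difference is how affine-linearity is justified: you expand by multilinearity over the rows and note that every term with two rows proportional to $\one^\intercal$ vanishes, whereas the paper rewrites $\det(t+A)$ as a bordered $(\ell+1)\times(\ell+1)$ determinant in which $t$ occurs in a single column; both verifications are valid.
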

\begin{proof}
A vector $r \in [0,1]^k$ is contained in~$\cP(S)$ if and only if $\card{\det(r_I+S_{I,J})} \leq 1$, for every $I \subseteq [k]$ and $J \subseteq [2^k]$ with $\card{I} = \card{J} = \ell$ and $1 \leq \ell \leq k$, and where~$r_I$ denotes the subvector of~$r$ with coordinates indexed by~$I$ and $S_{I,J}$ denotes the submatrix of~$S$ with rows and columns indexed by~$I$ and~$J$, respectively.
Now, in general, for an $\ell \times \ell$ matrix~$A$ with columns $a_1,\ldots,a_\ell$ and a vector $t \in \R^\ell$, we have
\begin{align}
\det(t+A) = \det(t + a_1,\ldots,t + a_\ell)
= \det\left(
\begin{matrix}
a_1 & \ldots & a_\ell & -t \\
  1 & \ldots &      1 &  1
\end{matrix}
\right).\label{eqn:linearity-det-A-t}
\end{align}
Therefore, the multilinearity of the determinant translates the condition $\card{\det(r_I+S_{I,J})} \leq 1$ into a pair of linear inequalities in the entries of~$r$.
\end{proof}

\subsection{Sauer Matrices of size 3}

We start by illustrating the polyhedrality of $\cP(S)$ on the Sauer Matrix $S_{(0,3)}$ of type $(0,3)$.
Note that the columns of $-S_{(0,3)}$ are given by the eight~$0/1$ vectors $\{0,1\}^3$.
Further, for any $3 \times 3$ submatrix $A = (a_1,a_2,a_3)$ of $S_{(0,3)}$, we have
\[
\card{\det(t+A)} = \card{\det(-a_1 - t,-a_2 - t,-a_3 - t)} = 6 \vol(\conv\{-a_1,-a_2,-a_3,t\}),
\]
so the condition $\card{\det(t+A)} \leq 1$ is a condition on the volume of the simplex in~$[0,1]^3$ with vertices $-a_1,-a_2,-a_3,t$.
It is straightforward to see that the only case in which this imposes a condition on~$t$ is when the vertices $-a_1,-a_2,-a_3 \in \{0,1\}^3$ are chosen such that they are pairwise not connected by an edge of the cube.
Also the $2 \times 2$ minors of $t+S_{(0,3)}$ do not further restrict the feasibility of~$t$ and in summary, we thus get
\begin{align}
\cP(S_{(0,3)}) &= \Big\{ r \in [0,1]^3 : 1 \leq r_1 + r_2 + r_3 \leq 2 , \ 0 \leq -r_1 + r_2 + r_3 \leq 1 , \nonumber \\
 &\phantom{= \Big\{ r \in [0,1]^3 :\ }\,0 \leq r_1 - r_2 + r_3 \leq 1 , \ 0 \leq \phantom{-}r_1 + r_2 - r_3 \leq 1 \,\Big\} \nonumber\\ 
&= \conv\left\{
{\small\begin{bmatrix}
0 \\ 1/2 \\ 1/2
\end{bmatrix}},
{\small\begin{bmatrix}
1/2 \\ 0 \\ 1/2
\end{bmatrix}},
{\small\begin{bmatrix}
1/2 \\ 1/2 \\ 0
\end{bmatrix}},
{\small\begin{bmatrix}
1 \\ 1/2 \\ 1/2
\end{bmatrix}},
{\small\begin{bmatrix}
1/2 \\ 1 \\ 1/2
\end{bmatrix}},
{\small\begin{bmatrix}
1/2 \\ 1/2 \\ 1
\end{bmatrix}}\right\}. \label{eqn:P-Sauer-type-03}
\end{align}
This is a regular crosspolytope with each of its vertices being the center of a facet of the cube~$[0,1]^3$.

Our second goal is to characterize the feasible vectors for Sauer Matrices of size~$3$ and type~$(1,2)$.

\begin{proposition}
\label{prop:Sauer-size-3-type-1-2}
Let $S$ be a Sauer Matrix of type~$(1,2)$, that is,
\[
S =
\left[\!\begin{array}{*8{c}}
0 & \pm 1 & 0 & 0 & \pm 1 & \pm1 & 0 & \pm 1 \\
0 & 0 & - 1 & 0 & - 1 & 0 & - 1 & - 1 \\
0 & 0 & 0 & - 1 & 0 & - 1 & - 1 & - 1
\end{array}\!\right]
\]
for any signs and such that at least one entry in the first row of~$S$ equals~$1$.
Let $\sigma \in \{+,-\}^4$ be the sign vector of the non-zero entries in the first row of~$S$, and let $r \in [0,1)^3$ be feasible for~$S$.
\begin{enumerate}[label=(\roman*)]
 \item If $\sigma \in \left\{ (+,-,+,-) , (-,+,-,+) \right\}$, then $r \in \conv\left\{ (0,\frac14,\frac12)^\intercal , (0,\frac34,\frac12)^\intercal \right\}$.
 \item If $\sigma \in \left\{ (+,+,-,-) , (-,-,+,+) \right\}$, then $r \in \conv\left\{ (0,\frac12,\frac14)^\intercal , (0,\frac12,\frac34)^\intercal \right\}$.
 \item If $\sigma$ is different from any of those in parts~(i) and~(ii), then $r = (0,\frac12,\frac12)^\intercal$.
\end{enumerate}
\end{proposition}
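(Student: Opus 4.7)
The plan is to extend the polyhedral analysis carried out above for the type $(0,3)$ Sauer Matrix to type $(1,2)$. First I would note that the hypothesis on $\sigma$ forces $r_1 = 0$: since at least one entry in the first row of $S$ equals $+1$, the $1 \times 1$ minor condition $\Delta_1(r+S) \leq 1$ gives $r_1 + 1 \leq 1$, combined with $r_1 \geq 0$. With $r_1$ fixed to zero, the formula for $\det(r+M)$ derived in the proof of \Cref{prop:polyhedrality-feasible-set} becomes an affine function of $(r_2, r_3)$ alone, so the feasibility region $\cP(S) \cap \{r_1 = 0\}$ is a polygon in $[0,1]^2$ cut out by inequalities of the form $|L(r_2, r_3)| \leq 1$, one pair per relevant minor of $S$.

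Two symmetries shrink the number of sign patterns to be examined. Swapping rows~$2$ and~$3$ of $S$ exchanges columns $3 \leftrightarrow 4$ and $5 \leftrightarrow 6$, acting on the data by $(\sigma_2, \sigma_3) \leftrightarrow (\sigma_3, \sigma_2)$ and $(r_2, r_3) \leftrightarrow (r_3, r_2)$; in particular case~(ii) reduces to case~(i) under this involution. Negating row~$1$ of $S$ (equivalently, replacing $\sigma$ by $-\sigma$) preserves the absolute value of every minor while keeping $r_1 = 0$ admissible, so the feasibility region is unchanged. After these identifications the 15 admissible sign patterns collapse into only a handful of representatives: one for case~(i), one for~(ii), and a few for case~(iii).

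For each representative, I would produce two pairs of opposing linear inequalities by computing specific minors. In case~(i) with $\sigma = (+,-,+,-)$, the $2 \times 2$ minors on rows $\{1,3\}$ of the column pairs $\{2,5\}$ and $\{6,8\}$ evaluate to $2 r_3$ and $2(r_3 - 1)$, which together force $r_3 = \tfrac{1}{2}$; the range $r_2 \in [\tfrac{1}{4}, \tfrac{3}{4}]$ is then provided by the $3 \times 3$ minors on columns $\{2,4,5\}$ and $\{2,5,7\}$ giving the expressions $2 r_2 + r_3 - 1$ and $1 - 2 r_2 + r_3$. For the representatives of case~(iii) the analogous $2 \times 2$ minors become trivial (matching first-row signs cancel), so instead one selects four $3 \times 3$ minors. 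For $\sigma = (+,+,+,+)$, for instance, columns $\{1,5,6\}$, $\{5,6,7\}$, $\{2,4,8\}$ and $\{2,3,8\}$ evaluate to $r_2 + r_3$, $r_2 + r_3 - 2$, $r_3 - r_2 - 1$ and $1 - r_2 + r_3$, forcing $r_2 + r_3 = 1$ and $r_2 = r_3$, hence $r = (0, \tfrac{1}{2}, \tfrac{1}{2})$.

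The main obstacle is the case split in part~(iii): for each of the remaining representative sign patterns one must exhibit a correct witnessing quadruple of column triples, obtained by tracking how the triples above transform under the two symmetries. This is a finite search through the $\binom{8}{3} = 56$ triples, each determinant reducing to a short affine expression in $(r_2, r_3)$, so in principle it is straightforward but somewhat laborious to write out by hand.
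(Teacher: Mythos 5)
Your overall strategy coincides with the paper's: force $r_1=0$ from the $1\times 1$ minors, exploit the two symmetries (negating row~1, harmless because $r_1=0$, and swapping rows~2 and~3, which interchanges $\sigma_2\leftrightarrow\sigma_3$ and $r_2\leftrightarrow r_3$ and reduces (ii) to (i)), and then extract linear constraints on $(r_2,r_3)$ from selected $2\times2$ and $3\times3$ minors. The computations you actually exhibit are correct: for $\sigma=(+,-,+,-)$ the minors on rows $\{1,3\}$ and column pairs $\{2,5\}$, $\{6,8\}$ force $r_3=\frac12$, and the $3\times3$ minors on columns $\{2,4,5\}$ and $\{2,5,7\}$ give $\frac14\leq r_2\leq\frac34$; for $\sigma=(+,+,+,+)$ your four column triples do force $r=(0,\frac12,\frac12)^\intercal$ (the paper treats this pattern instead via the crosspolytope description \eqref{eqn:P-Sauer-type-03} restricted to $r_1=0$).

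The genuine gap is in part (iii). After your two reductions there remain, besides $(+,+,+,+)$, four further representative sign patterns, namely $(+,+,+,-)$, $(+,+,-,+)\sim(+,-,+,+)$, $(+,-,-,+)$ and $(+,-,-,-)$, and for none of these do you produce witnessing minors. Your proposed shortcut --- obtaining their witnesses by ``tracking how the triples above transform under the two symmetries'' --- cannot work: both symmetries act within the orbit of the sign vector $\sigma$ (the orbit of the all-plus pattern is $\{(+,+,+,+),(-,-,-,-)\}$), so they never reach a mixed pattern, and a fresh choice of minors is required in each remaining case. Moreover, your claim that in case (iii) the relevant $2\times2$ minors become trivial is false for these mixed patterns: whenever the first-row signs of two columns sharing the same lower part differ, one gets a nontrivial bound of the form $r_2\leq\frac12$, $r_2\geq\frac12$, $r_3\leq\frac12$ or $r_3\geq\frac12$ (for instance, for $\sigma=(+,+,+,-)$ the minors on rows $\{1,2\}$, columns $\{5,8\}$ and rows $\{1,3\}$, columns $\{6,8\}$ give $r_2\geq\frac12$ and $r_3\geq\frac12$), and it is precisely these constraints, combined with a single additional $3\times3$ minor per case (the paper uses the submatrices $S_{248}$, $S_{238}$, $S_{156}$, $S_{567}$), that settle the remaining patterns quickly. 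So the plan is viable and parallel to the paper's proof, but as written the four remaining subcases of (iii) are asserted rather than proved.
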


\begin{proof}
We start with some general observations:
By assumption, we have $r_1 = 0$ in any case, because there is an entry equal to~$1$ in the first row of~$S$.
This means that the possibilities for feasible vectors~$r$ are not affected by possibly negating the first row of~$S$.
It thus suffices to consider only those~$\sigma$ with $\sigma_1 = +$, and we make this assumption throughout the following.

Now, for any $i,j \in \{1,2,3\}$ and any $k,\ell \in \{1,2,\ldots,8\}$, we denote by $S_{ij,k\ell}$ the $2 \times 2$ submatrix of~$S$ with entries in rows either~$i$ or~$j$ and columns either~$k$ or~$\ell$.
With this notation and the condition $\Delta_2(r+S) \leq 1$ on the minors of size~$2$, we obtain:
\begin{enumerate}[label=(C12)]

 \item[(C$_{12}$)] If $\sigma_2 \neq \sigma_1$, then $r+S$ contains ${\small\begin{bmatrix}
0 \\ r_3
\end{bmatrix}}+S_{13,25} = \left[\!\begin{array}{*2{c}}
\pm 1   & \mp 1  \\
r_3 & r_3
\end{array}\!\right]$, which implies $r_3 \leq \frac12$.

 \item[(C$_{13}$)] If $\sigma_3 \neq \sigma_1$, then $r+S$ contains ${\small\begin{bmatrix}
0 \\ r_2
\end{bmatrix}}+S_{12,26} = \left[\!\begin{array}{*2{c}}
\pm 1   & \mp 1  \\
r_2 & r_2
\end{array}\!\right]$, which implies $r_2 \leq \frac12$.

 \item[(C$_{24}$)] If $\sigma_2 \neq \sigma_4$, then $r+S$ contains ${\small\begin{bmatrix}
0 \\ r_2
\end{bmatrix}}+S_{12,58} = \left[\!\begin{array}{*2{c}}
\pm 1   & \mp 1  \\
r_2 - 1 & r_2 - 1
\end{array}\!\right]$, which implies $r_2 \geq \frac12$.

 \item[(C$_{34}$)] If $\sigma_3 \neq \sigma_4$, then $r+S$ contains ${\small\begin{bmatrix}
0 \\ r_3
\end{bmatrix}}+S_{13,68} = \left[\!\begin{array}{*2{c}}
\pm 1   & \mp 1  \\
r_3 - 1 & r_3 - 1
\end{array}\!\right]$, which implies $r_3 \geq \frac12$.

\end{enumerate}
Finally, for any indices $i,j,k \in \{1,2,\ldots,8\}$, we denote by $S_{ijk}$ the $3 \times 3$ submatrix of~$S$ consisting of its columns indexed by~$i,j$, and~$k$.

{\bfseries (i)}: Let $\sigma = (+,-,+,-)$.
As the first two and the last two entries of~$\sigma$ differ, in view of the conditions (C$_{12}$) and (C$_{34}$), we get $r_3 = \frac12$.
Since $\sigma_1 = +$ and $\sigma_2 = -$, the matrix $r+S$ contains $r+S_{245} = \left[\!\begin{array}{*3{c}}
1   & 0       & -1  \\
r_2 & r_2     & r_2 - 1 \\
r_3 & r_3 - 1 & r_3
\end{array}\!\right]$,
for which the condition $\Delta_3(t+S) \leq 1$ yields $2r_2 + r_3 \leq 2$, and thus $r_2 \leq \frac34$.
Similarly, $r+S$ also contains $r+S_{257} = \left[\!\begin{array}{*3{c}}
1   & -1      & 0  \\
r_2 & r_2 - 1 & r_2 - 1 \\
r_3 & r_3     & r_3 - 1
\end{array}\!\right]$,
for which the condition $\Delta_3(t+S) \leq 1$ yields $2r_2 - r_3 \geq 0$, and thus $r_2 \geq \frac14$.

{\bfseries (ii)}: Since interchanging the last two rows of~$S$ means to interchange $\sigma_2$ and $\sigma_3$, and since this change of rows in~$S$ translates into exchanging the coordinates~$r_2$ and~$r_3$, the claim follows by~(i).

{\bfseries (iii)}: By negating the first row of~$S$, the case $\sigma = (+,+,+,+)$ corresponds to the Sauer Matrix $S_{(0,3)}$ of type~$(0,3)$ with the additional restriction that $r_1 = 0$.
In view of the characterization~\eqref{eqn:P-Sauer-type-03} of its feasible vectors, this leaves as the only possibility $r = (0,\frac12,\frac12)^\intercal$, as claimed.
So, we may assume that at least one of the coordinates $\sigma_2,\sigma_3,\sigma_4$ equals~$-$.

First, assume that $\sigma_1 = \sigma_4 = +$.
If $\sigma_2 = -$, then the submatrix $S_{248}$ of~$S$ yields $r_2 \leq r_3$ via the minor condition $\Delta_3(r+S)$ just as in part~(i).
Since by (C$_{12}$) and (C$_{24}$) we have $r_3 \leq \frac12$ and $r_2 \geq \frac12$, respectively, we obtain $r = (0,\frac12,\frac12)^\intercal$, as claimed.
An analogous argument works for the case that $\sigma_3 = -$, by using the submatrix~$S_{238}$ of~$S$.

Second, assume that $\sigma_4 = - \neq \sigma_1$.
The only cases left to consider are those for which $\sigma_2 = \sigma_3$.
If $\sigma_2 = \sigma_3 = +$, then by (C$_{24}$) and (C$_{34}$), we get $r_2 \geq \frac12$ and $r_3 \geq \frac12$, respectively.
Moreover, using the submatrix~$S_{156}$ of~$S$ as before yields that $r_2 + r_3 \leq 1$ implying the desired $r = (0,\frac12,\frac12)^\intercal$.
Again, the case $\sigma_2 = \sigma_3 = -$ is completely analogous, using the conditions (C$_{12}$) and~(C$_{13}$), and the submatrix~$S_{567}$ of~$S$.
\end{proof}

\subsection{Sauer Matrices of size 4}

Based on the knowledge of feasibility of Sauer Matrices of size~$3$, we are now in position to prove \Cref{prop:Sauer-size-4}.
Sauer Matrices of the types $(0,4)$, $(3,1)$ and $(4,0)$ are easy to deal with, so let us start with those.

\begin{proof}[Proof of \Cref{prop:Sauer-size-4}~(i) and~(iv)]
{\bfseries (i)}: Assume that $r \in [0,1)^4$ is such that $r+S$ is totally $1$-submodular, and consider the following two $4 \times 4$ submatrices of~$S$:
\begin{align}
M = {\tiny
\left[\!\begin{array}{r|rrr}
 0 &  0 &  0 &  0 \\\hline
 0 &  0 & -1 & -1 \\
 0 & -1 &  0 & -1 \\
 0 & -1 & -1 &  0 
\end{array}\!\right]}
\quad\textrm{ and }\quad
N = {\tiny
\left[\!\begin{array}{r|rrr}
-1 & -1 & -1 & -1 \\\hline
-1 & -1 &  0 &  0 \\
-1 &  0 & -1 &  0 \\
-1 &  0 &  0 & -1 
\end{array}\!\right]}.\label{eqn:matrix-N}
\end{align}
By the $4 \times 4$ minor condition on $r+S$, we have
\[
\card{\det(r+M)} = r_1 \cdot \det
{\tiny
\begin{bmatrix}
0 & 1 & 1 \\
1 & 0 & 1 \\
1 & 1 & 0 
\end{bmatrix}} = 2 r_1 \leq 1,
\]
and hence $r_1 \leq \frac12$.
Likewise, we have
\[
\card{\det(r+N)} = (1-r_1) \cdot \det
{\tiny
\begin{bmatrix}
0 & 1 & 1 \\
1 & 0 & 1 \\
1 & 1 & 0 
\end{bmatrix}} = 2 (1-r_1) \leq 1,
\]
and hence $r_1 \geq \frac12$, so that actually $r_1 = \frac12$.
Analogous arguments for the other coordinates of~$r$, show that $r = (\frac12,\frac12,\frac12,\frac12)^\intercal$ as claimed.

{\bfseries (iv)}: If in the $i$th row of a Sauer Matrix~$S$ there is an entry equal to~$1$, then $r_i=0$, because of $\Delta_1(r+S) \leq 1$.
In the types $(3,1)$ and $(4,0)$, the first three rows in~$S$ contain an entry equal to~$1$, so that they contains a Sauer Matrix of size~$3$ that is itself totally $1$-submodular.
However, we noted in~\eqref{eqn:no-TU-Sauer-Matrix-size-3} that no such Sauer Matrix exists.
\end{proof}

\begin{proof}[{Proof of \Cref{prop:Sauer-size-4}~(ii)}]
We assume that the first row of each considered Sauer Matrix~$S$ of type $(1,3)$ contains an entry equal to~$1$, so that $r_1 = 0$.
We can moreover assume that $(-1,-1,-1,-1)^\intercal$ is a column of~$S$ (by possibly multiplying the first row by~$-1$).
We now employ a case distinction based on the signs of the entries in the first row of the columns $a = (\pm 1,-1,0,0)^\intercal$, $b = (\pm1,0,-1,0)^\intercal$, and $c = (\pm1,0,0,-1)^\intercal$ of~$S$.

\smallskip
\noindent\emph{Case 1:} $a_1 = b_1 = c_1 = -1$.

Under this assumption, $S$ contains the matrix~$N$ from~\eqref{eqn:matrix-N} as a submatrix and thus $r_1 \geq \frac12$, contradicting that $r_1 = 0$.

\smallskip
\noindent\emph{Case 2:} $a_1 = b_1 = c_1 = 1$.

 In this case, $S$ contains the submatrices
\[
A = {\tiny
\left[\!\begin{array}{r|rrr}
 0 &  1 &  1 &  1 \\\hline
 0 & -1 &  0 &  0 \\
 0 &  0 & -1 &  0 \\
 0 &  0 &  0 & -1 
\end{array}\!\right]}
\quad\textrm{ and }\quad
B = {\tiny
\left[\!\begin{array}{r|rrr}
 0 &  1 &  1 &  1 \\\hline
-1 & -1 &  0 &  0 \\
-1 &  0 & -1 &  0 \\
-1 &  0 &  0 & -1 
\end{array}\!\right]}.
\]
The conditions $\card{\det(r+A)} \leq 1$ and $\card{\det(r+B)} \leq 1$ translate into the contradicting inequalities $r_2+r_3+r_4 \leq 1$ and $r_2+r_3+r_4 \geq 2$, respectively.

\smallskip
\noindent\emph{Case 3:} Exactly two of the entries $a_1,b_1,c_1$ equal~$-1$.

Without loss of generality, we may permute the last three rows of~$S$, and assume that $a_1 = b_1 = -1$.
 We find that $S$ now contains the submatrices
\[
C = {\tiny
\left[\!\begin{array}{r|rrr}
 0 & -1 & -1 &  0 \\\hline
 0 & -1 &  0 &  0 \\
 0 &  0 & -1 &  0 \\
 0 &  0 &  0 & -1 
\end{array}\!\right]}
\,,\,
D = {\tiny
\left[\!\begin{array}{r|rrr}
 0 & -1 & -1 &  0 \\\hline
-1 & -1 &  0 &  0 \\
-1 &  0 & -1 &  0 \\
-1 &  0 &  0 & -1 
\end{array}\!\right]}
\,,\,
E = {\tiny
\left[\!\begin{array}{r|rrr}
-1 & -1 & -1 &  0 \\\hline
-1 & -1 &  0 & -1 \\
-1 &  0 & -1 & -1 \\
-1 &  0 &  0 &  0 
\end{array}\!\right]}.
\]
The conditions $\card{\det(r+C)} \leq 1$, $\card{\det(r+D)} \leq 1$ and $\card{\det(r+E)} \leq 1$ translate into the contradicting inequalities $r_2 + r_3 \leq 1$, $r_4 \geq \frac12$, and $r_4 + 1 \leq r_2 + r_3$, respectively.

\smallskip
\noindent\emph{Case 4:} Exactly two of the entries $a_1,b_1,c_1$ equal~$1$.

As in Case~3, we may assume that $a_1 = b_1 = 1$.
Here, the following six matrices can be found as submatrices in~$S$:
\[
\tiny
\left[\!\begin{array}{r|rrr}
-1 &  0 &  0 & -1 \\\hline
-1 & -1 &  0 &  0 \\
-1 &  0 & -1 &  0 \\
-1 &  0 &  0 & -1 
\end{array}\!\right]
\ ,\ 
\left[\!\begin{array}{r|rrr}
-1 &  0 &  0 & -1 \\\hline
-1 & -1 & -1 &  0 \\
-1 &  0 &  0 &  0 \\
-1 &  0 & -1 & -1 
\end{array}\!\right]
\ ,\ 
\left[\!\begin{array}{r|rrr}
-1 &  0 &  0 & -1 \\\hline
-1 &  0 &  0 &  0 \\
-1 & -1 & -1 &  0 \\
-1 & -1 &  0 & -1 
\end{array}\!\right],
\]
\[
\tiny
\left[\!\begin{array}{r|rrr}
 0 &  1 &  1 &  0 \\\hline
-1 & -1 &  0 &  0 \\
-1 &  0 & -1 &  0 \\
-1 &  0 &  0 & -1 
\end{array}\!\right]
\ ,\ 
\left[\!\begin{array}{r|rrr}
 0 &  1 &  1 &  0 \\\hline
-1 & -1 &  0 & -1 \\
-1 &  0 & -1 & -1 \\
-1 &  0 &  0 &  0 
\end{array}\!\right]
\ ,\ 
\left[\!\begin{array}{r|rrr}
 0 &  1 &  1 &  0 \\\hline
 0 & -1 &  0 &  0 \\
 0 &  0 & -1 &  0 \\
 0 &  0 &  0 & -1 
\end{array}\!\right].
\]
The minor conditions for these matrices translate into the inequality system
\begin{align*}
r_4 &\leq \tfrac12 & r_3 &\leq r_2 & r_2 &\leq r_3 \\
r_4 &\geq \tfrac12 & r_2 + r_3 &\geq 1 & r_2 + r_3 &\leq 1
\end{align*}
in the same order as the matrices were given above.
Solving this system of inequalities shows that necessarily $r_2 = r_3 = r_4 = \frac12$, and the proof is complete.
\end{proof}

\begin{proof}[{Proof of \Cref{prop:Sauer-size-4}~(iii)}]
We write a Sauer Matrix~$S$ of type $(2,2)$ in the form
\[
{\small
\left[\!\begin{array}{rrrr|rrrr|rrrr|rrrr}
\multicolumn{4}{c|}{$Block 1$} & \multicolumn{4}{c|}{$Block 2$} & \multicolumn{4}{c|}{$Block 3$} & \multicolumn{4}{c}{$Block 4$} \\ \hline
0 & 0 & 0 & 0 &  \pm1 & \pm1 & \pm1 & \pm1 &  0 &  0 &  0 & 0 & \pm1 & \pm1 & \pm1 & \pm1 \\ \hline
0 & 0 & 0 & 0 &  0 &  0 &  0 &  0 & \pm1 & \pm1 & \pm1 & \pm1 & \pm1 & \pm1 & \pm1 & \pm1 \\ \hline
0 &  -1 &  0 & -1 &  0 &  -1 &  0 & -1 & 0 &  -1 &  0 & -1 & 0 &  -1 &  0 & -1  \\
0 &  0 &  -1 &  -1 & 0 &  0 &  -1 &  -1 & 0 &  0 &  -1 &  -1 & 0 &  0 &  -1 &  -1 
\end{array}\!\right]}
\]
where in each of the first two rows we have at least one entry equal to~$1$.
We find four Sauer Matrices of type $(1,2)$ as submatrices of~$S$:
\begin{align*}
M_{1,1,2} - \textrm{consisting of rows } 1,3,4 \textrm{ and the columns in Block 1 and Block 2}\\
M_{1,3,4} - \textrm{consisting of rows } 1,3,4 \textrm{ and the columns in Block 3 and Block 4}\\
M_{2,1,4} - \textrm{consisting of rows } 2,3,4 \textrm{ and the columns in Block 1 and Block 4}\\
M_{2,2,3} - \textrm{consisting of rows } 2,3,4 \textrm{ and the columns in Block 2 and Block 3}
\end{align*}
Now, let $r \in [0,1)^4$ be feasible for~$S$.
As the first two rows of~$S$ both contain an entry equal to~$1$, we always have $r_1 = r_2 = 0$.
We find that $(0,r_3,r_4)^\intercal$ must be a feasible vector for the Sauer Matrices $M_{1,1,2}$, $M_{1,3,4}$, $M_{2,1,4}$, and $M_{2,2,3}$.
Using \Cref{prop:Sauer-size-3-type-1-2}, we see that if we want to allow the possibility of $r$ being different from $(0,0,\frac12,\frac12)^\intercal$, then the sign patterns in the $1 \times 4$ blocks $[\pm1,\pm1,\pm1,\pm1]$ in those four matrices must all belong either to $\{(+,-,+,-) , (-,+,-,+)\}$ or to $\{(+,+,-,-) , (-,-,+,+)\}$.
Moreover, as $r_1 = r_2 = 0$, we may multiply the first or the second row of~$S$ with $-1$ without loosing the total $1$-submodularity of $r+S$.
This means that we may assume that the sign patterns in the $1 \times 4$ blocks corresponding to $(\textrm{Row 1},\textrm{Block 2})$ and $(\textrm{Row 2},\textrm{Block 3})$ are the same.
Again as $r_1 = r_2 = 0$, total $1$-submodularity of $r+S$ is also not affected if we exchange the first two rows of~$S$, or Block 2 with Block 3.

These reductions show that the only Sauer Matrices of type $(2,2)$ that possibly allow feasible translation vectors $r = (0,0,r_3,r_4)^\intercal$ different from $(0,0,\frac12,\frac12)^\intercal$ have its first two rows given by
\begin{align*}
R_1 &= \left[\!\begin{array}{rrrr|rrrr|rrrr|rrrr}
0 & 0 & 0 & 0 & 1 & -1 & 1 & -1 &  0 &  0 &  0 & 0 & 1 & -1 & 1 & -1 \\
0 & 0 & 0 & 0 &  0 &  0 &  0 &  0 & 1 & -1 & 1 & -1 & 1 & -1 & 1 & -1
\end{array}\!\right], \\
R_2 &= \left[\!\begin{array}{rrrr|rrrr|rrrr|rrrr}
0 & 0 & 0 & 0 & 1 & -1 & 1 & -1 &  0 &  0 &  0 & 0 & 1 & -1 & 1 & -1 \\
0 & 0 & 0 & 0 &  0 &  0 &  0 &  0 & 1 & -1 & 1 & -1 & -1 & 1 & -1 & 1
\end{array}\!\right], \\
R_3 &= \left[\!\begin{array}{rrrr|rrrr|rrrr|rrrr}
0 & 0 & 0 & 0 & 1 & -1 & 1 & -1 &  0 &  0 &  0 & 0 & -1 & 1 & -1 & 1 \\
0 & 0 & 0 & 0 &  0 &  0 &  0 &  0 & 1 & -1 & 1 & -1 & -1 & 1 & -1 & 1
\end{array}\!\right],
\end{align*}
or by
\begin{align*}
R_4 &= \left[\!\begin{array}{rrrr|rrrr|rrrr|rrrr}
0 & 0 & 0 & 0 & 1 & 1 & -1 & -1 &  0 &  0 &  0 & 0 & 1 & 1 & -1 & -1 \\
0 & 0 & 0 & 0 &  0 &  0 &  0 &  0 & 1 & 1 & -1 & -1 & 1 & 1 & -1 & -1
\end{array}\!\right], \\
R_5 &= \left[\!\begin{array}{rrrr|rrrr|rrrr|rrrr}
0 & 0 & 0 & 0 & 1 & 1 & -1 & -1 &  0 &  0 &  0 & 0 & 1 & 1 & -1 & -1 \\
0 & 0 & 0 & 0 &  0 &  0 &  0 &  0 & 1 & 1 & -1 & -1 & -1 & -1 & 1 & 1
\end{array}\!\right], \\
R_6 &= \left[\!\begin{array}{rrrr|rrrr|rrrr|rrrr}
0 & 0 & 0 & 0 & 1 & 1 & -1 & -1 &  0 &  0 &  0 & 0 & -1 & -1 & 1 & 1 \\
0 & 0 & 0 & 0 &  0 &  0 &  0 &  0 & 1 & 1 & -1 & -1 & -1 & -1 & 1 & 1
\end{array}\!\right].
\end{align*}
Observe also that for the first three matrices, the feasible vectors are of the form $r = (0,0,r_3,\frac12)^\intercal$, for $\frac14 \leq r_3 \leq \frac34$, whereas for the second three matrices they have the form $r = (0,0,\frac12,r_4)^\intercal$, for $\frac14 \leq r_4 \leq \frac34$.
Let us denote the $4 \times 4$ submatrix of~$S$ indexed by the columns $i,j,k,\ell \in \{1,2,\ldots,16\}$ by $S_{i,j,k,\ell}$.

\smallskip
\noindent\emph{Case 1:} First two rows of~$S$ either $R_1$, $R_2$, or $R_3$, and $r  = (0,0,r_3,\frac12)^\intercal$.

For $R_1$, applying the condition $\Delta_4(r+S) \leq 1$ to the submatrices $S_{1,6,10,16}$ and $S_{2,5,9,15}$, yields $r_3 \leq \frac12$ and $r_3 \geq \frac12$, respectively.
Thus $r = (0,0,\frac12,\frac12)^\intercal$.

For $R_2$, applying the condition $\Delta_4(r+S) \leq 1$ to the submatrices $S_{1,5,10,15}$ and $S_{2,6,9,16}$, yields $r_3 \leq \frac12$ and $r_3 \geq \frac12$, respectively.
Thus $r = (0,0,\frac12,\frac12)^\intercal$.

For $R_3$, the submatrix $S_{1,6,10,14}$ gives determinant $\card{\det(r+S_{1,6,10,14})} = \frac32$, contradicting the condition $\Delta_4(r+S) \leq 1$.
Thus, there is no feasible vector $r \in [0,1)^4$ at all in this case.

\smallskip
\noindent\emph{Case 2:} First two rows of~$S$ either $R_4$, $R_5$, or $R_6$, and $r  = (0,0,\frac12,r_4)^\intercal$.

For $R_4$, applying the condition $\Delta_4(r+S) \leq 1$ to the submatrices $S_{1,6,10,13}$ and $S_{3,5,9,14}$, yields $r_3 \leq \frac12$ and $r_3 \geq \frac12$, respectively.
Thus $r = (0,0,\frac12,\frac12)^\intercal$.

For $R_5$, applying the condition $\Delta_4(r+S) \leq 1$ to the submatrices $S_{1,5,10,14}$ and $S_{3,6,9,13}$, yields $r_3 \leq \frac12$ and $r_3 \geq \frac12$, respectively.
Thus $r = (0,0,\frac12,\frac12)^\intercal$.

For $R_6$, the submatrix $S_{1,7,11,15}$ gives determinant $\card{\det(r+S_{1,6,10,14})} = \frac32$, contradicting the condition $\Delta_4(r+S) \leq 1$.
Thus, there is no feasible vector $r \in [0,1)^4$ at all in this case.

\smallskip
Summarizing our results from above, we see that if a Sauer Matrix of type $(2,2)$ admits a feasible vector $r \in [0,1)^4$, then $r = (0,0,\frac12,\frac12)^\intercal$, as desired.
\end{proof}

\subsection{Sauer Matrices of size 5}

\begin{proposition}\ 
\label{prop:negative-Sauer-size-5}
\begin{enumerate}[label=(\roman*)]
 \item The Sauer Matrix of type $(0,5)$ is infeasible for translations.
 \item No Sauer Matrix of type $(1,4)$ is feasible for translations.
 \item No Sauer Matrix of type $(2,3)$ is feasible for translations.
\end{enumerate}
\end{proposition}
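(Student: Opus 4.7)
My plan is to handle the three parts in turn, exploiting a common reduction: whenever the $i$-th row of a Sauer Matrix $S$ contains a $+1$ entry, the $1 \times 1$ minor condition forces $r_i = 0$, and the columns of $S$ whose supports avoid the $+1$-carrying rows then restrict on the remaining rows to a Sauer Matrix of strictly smaller size that must itself be feasible. Via \Cref{prop:Sauer-size-4}, this pins down most coordinates of~$r$, and only the sign choices in the first one or two rows of~$S$ remain to be analysed via further $5 \times 5$ minor conditions.

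For part~(i), I directly extend the proof of \Cref{prop:Sauer-size-4}(i). Inside the (unique) Sauer Matrix $S_{(0,5)}$ of type $(0,5)$, I consider the two $5 \times 5$ submatrices
\[
M_5 = \left[\!\begin{array}{r|rrrr}
 0 &  0 &  0 &  0 &  0 \\ \hline
 0 &  0 & -1 & -1 & -1 \\
 0 & -1 &  0 & -1 & -1 \\
 0 & -1 & -1 &  0 & -1 \\
 0 & -1 & -1 & -1 &  0
\end{array}\!\right]
\quad\text{and}\quad
N_5 = \left[\!\begin{array}{r|rrrr}
-1 & -1 & -1 & -1 & -1 \\ \hline
-1 & -1 &  0 &  0 &  0 \\
-1 &  0 & -1 &  0 &  0 \\
-1 &  0 &  0 & -1 &  0 \\
-1 &  0 &  0 &  0 & -1
\end{array}\!\right]
\]
(whose columns correspond to the supports $\emptyset, \{3,4,5\}, \{2,4,5\}, \{2,3,5\}, \{2,3,4\}$ and $\{1,\ldots,5\}, \{1,2\}, \{1,3\}, \{1,4\}, \{1,5\}$, respectively). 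Subtracting the first column from all others in $r+M_5$ and expanding along the first row yields $\det(r+M_5) = -3\,r_1$, where the factor $3$ is $|\det(I_4 - J_4)|$ with $J_4$ the $4 \times 4$ all-ones matrix. An analogous manipulation of $r+N_5$ gives $\det(r+N_5) = -3\,(r_1 - 1)$. Thus the $5 \times 5$ minor condition forces the contradictory inequalities $r_1 \le \tfrac13$ and $r_1 \ge \tfrac23$.

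For part~(ii), let $S$ be of type $(1,4)$ with its $+1$-containing row being the first, so that $r_1 = 0$. The $16$ columns of $S$ whose supports avoid index~$1$ restrict on rows $2,3,4,5$ to a Sauer Matrix of type $(0,4)$, and \Cref{prop:Sauer-size-4}(i) then pins down $(r_2, r_3, r_4, r_5) = (\tfrac12, \tfrac12, \tfrac12, \tfrac12)$. After scaling $r+S$ by~$2$, rows $2, \ldots, 5$ become $\pm 1$-valued, while row~$1$ has entries in $\{-2, 0, 2\}$. I then extract contradictions from the $16$ columns whose supports contain the index~$1$ via $5 \times 5$ minor computations using block-matrix formulae. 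For instance, the submatrix whose columns have supports $\{1\}, \{2\}, \{3\}, \{4\}, \{1,2,3,4\}$ has determinant $\sigma_{\{1\}} + \tfrac12 \sigma_{\{1,2,3,4\}}$, forcing $\sigma_{\{1\}} \ne \sigma_{\{1,2,3,4\}}$; permuting the roles of rows $2,3,4,5$ yields analogous constraints for all four $4$-element supports containing~$1$. Further submatrices involving the $2$-element supports $\{1,j\}$ and the full support $\{1,\ldots,5\}$ should yield sufficient additional sign conditions to rule out every consistent assignment.

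For part~(iii), let $S$ be of type $(2,3)$ with $+1$ entries in its first two rows, so that $r_1 = r_2 = 0$. The $8$ columns whose supports avoid both $1$ and $2$ form on rows $3,4,5$ a Sauer Matrix of type $(0,3)$, so by the explicit description~\eqref{eqn:P-Sauer-type-03} of $\cP(S_{(0,3)})$ the vector $(r_3, r_4, r_5)$ lies in the crosspolytope with six facet-centre vertices (intersected with $[0,1)^3$). Restricting further to the $16$ columns whose supports contain exactly one of $1, 2$ yields two size-$3$ Sauer Matrices of type $(1,2)$ on rows $3,4,5$, to each of which \Cref{prop:Sauer-size-3-type-1-2} applies and further restricts $(r_3, r_4, r_5)$. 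A case distinction on which face of the crosspolytope contains $(r_3, r_4, r_5)$, combined with the sign patterns in rows $1, 2$ of the columns whose supports contain $1$ or $2$, will then produce a contradiction in each case via $5 \times 5$ minor computations analogous to those in part~(ii). The main obstacle throughout will be the combinatorial bookkeeping in parts~(ii) and~(iii): tracking many distinct $5 \times 5$ submatrices and eliminating every compatible sign pattern. Part~(i), by contrast, is a clean one-line determinant calculation whose passage from size~$4$ to size~$5$ simply replaces $|\det(I_3 - J_3)| = 2$ by $|\det(I_4 - J_4)| = 3$, thereby pushing the two bounds on $r_1$ past each other.
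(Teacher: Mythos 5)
Your part~(i) is correct and is essentially the paper's own argument: the same two submatrices (the paper calls them $X$ and $Y$) give $3r_1 \leq 1$ and $3(1-r_1) \leq 1$. The problem is with parts~(ii) and~(iii): there you set up the right framework (force $r_i = 0$ for the rows containing a $+1$, pin the remaining coordinates to $\tfrac12$ via smaller Sauer Matrices, then look for a violated $5\times 5$ minor among the columns whose supports meet the $+1$-rows), and your one worked example in~(ii) -- the submatrix with supports $\{1\},\{2\},\{3\},\{4\},\{1,2,3,4\}$ giving $\sigma_{\{1\}} + \tfrac12\sigma_{\{1,2,3,4\}}$ -- is a correct computation. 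But the proof stops exactly where the real work begins: ``should yield sufficient additional sign conditions'' and ``will then produce a contradiction in each case'' are promises, not arguments. The free sign choices in the first row (part~(ii)) and first two rows (part~(iii)) must actually be exhausted, and it is not obvious a priori that the minor conditions close off every assignment; the paper does this concretely, e.g.\ in~(ii) by normalizing so that $(-1,-1,-1,-1,-1)^\intercal$ is a column, showing that a $-1$ first entry in any column of support $\{1,j\}$ creates a submatrix with $\card{\det(r+M)} = \tfrac32$, and then observing that the only remaining possibility forces a submatrix $M'$ with $\det(r+M') = -2$; and in~(iii) by a four-case analysis on the signs of six designated columns, each case exhibiting an explicit submatrix with minor $\tfrac32$.

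A second, smaller issue is in your reduction for~(iii). The paper pins $r = (0,0,\tfrac12,\tfrac12,\tfrac12)$ in one step by applying \Cref{prop:Sauer-size-4}~(ii) to the size-$4$ Sauer Matrices sitting in the rows $\{1,3,4,5\}$ and $\{2,3,4,5\}$ -- a result you already have available. Your alternative route is both weaker and not well-formed as stated: the columns avoiding indices $1$ and $2$ only place $(r_3,r_4,r_5)$ in the crosspolytope $\cP(S_{(0,3)})$, and ``two size-$3$ Sauer Matrices of type $(1,2)$ on rows $3,4,5$'' cannot be right, since in a type-$(2,3)$ matrix the rows $3,4,5$ contain no entry $+1$ (a type-$(1,2)$ Sauer Matrix needs a row with a $+1$, which here must be row $1$ or $2$). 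Even after repairing this, \Cref{prop:Sauer-size-3-type-1-2} would in general leave one-parameter segments of candidate vectors, so the final case analysis you defer would be substantially heavier than the paper's. As it stands, (ii) and (iii) are incomplete.
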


\begin{proof}
{\bfseries (i)}: The argument is similar to the one for \Cref{prop:Sauer-size-4}~(\romannumeral1).
Assume for contradiction, that there is a vector $r \in [0,1)^5$ with $\Delta_5(r+S) \leq 1$.
Consider the following two $5 \times 5$ submatrices of~$S$:
\[
X = {\tiny
\left[\!\begin{array}{r|rrrr}
 0 &  0 &  0 &  0 &  0 \\\hline
 0 &  0 & -1 & -1 & -1 \\
 0 & -1 &  0 & -1 & -1 \\
 0 & -1 & -1 &  0 & -1 \\
 0 & -1 & -1 & -1 &  0
\end{array}\!\right]}
\quad\textrm{ and }\quad
Y = {\tiny
\left[\!\begin{array}{r|rrrr}
-1 & -1 & -1 & -1 & -1 \\\hline
-1 & -1 &  0 &  0 &  0 \\
-1 &  0 & -1 &  0 &  0 \\
-1 &  0 &  0 & -1 &  0 \\
-1 &  0 &  0 &  0 & -1
\end{array}\!\right]}.
\]
By the $5 \times 5$ minor condition on $r+S$, we have
\[
\card{\det(r+X)} = r_1 \cdot \det
{\tiny
\begin{bmatrix}
0 & 1 & 1 & 1 \\
1 & 0 & 1 & 1 \\
1 & 1 & 0 & 1 \\
1 & 1 & 1 & 0
\end{bmatrix}} = 3 r_1 \leq 1,
\]
and hence $r_1 \leq \frac13$.
Likewise, we have
\[
\card{\det(r+Y)} = (1-r_1) \cdot \det
{\tiny
\begin{bmatrix}
0 & 1 & 1 & 1 \\
1 & 0 & 1 & 1 \\
1 & 1 & 0 & 1 \\
1 & 1 & 1 & 0
\end{bmatrix}} = 3 (1-r_1) \leq 1.
\]
Therefore, we get $r_1 \geq \frac23$, a contradiction.

{\bfseries (ii)}: Let $S$ be a Sauer Matrix of type $(1,4)$ and without loss of generality, we may assume that the first row of~$S$ contains an entry equal to~$1$.
We also assume for contradiction that there is some $r \in [0,1)^5$ such that $r+S$ is totally $1$-submodular.
As the entries of $r+S$ are contained in $[-1,1]$, we get that $r_1 = 0$.
Moreover, the last four rows of~$S$ contain a Sauer Matrix of type $(0,4)$.
By \Cref{prop:Sauer-size-4}~(\romannumeral1), this means that $r_2 = r_3 = r_4 = r_5 = \frac12$, so that in summary there is only one possibility for the translation vector~$r$.

Now, as $r_1 = 0$, we may multiply the first row of~$S$ with $-1$ if needed, and can assume that the vector $(-1,-1,-1,-1,-1)^\intercal$ is a column of~$S$.
If~$M$ denotes any of the four matrices
\begin{align*}
&{\tiny
\left[\!\begin{array}{r|rrrr}
-1 & -1 &  0 &  0 &  0 \\\hline
-1 & -1 &  0 &  0 &  0 \\
-1 &  0 & -1 &  0 &  0 \\
-1 &  0 &  0 & -1 &  0 \\
-1 &  0 &  0 &  0 & -1
\end{array}\!\right]
\,,\,
\left[\!\begin{array}{r|rrrr}
-1 &  0 & -1 &  0 &  0 \\\hline
-1 & -1 &  0 &  0 &  0 \\
-1 &  0 & -1 &  0 &  0 \\
-1 &  0 &  0 & -1 &  0 \\
-1 &  0 &  0 &  0 & -1
\end{array}\!\right]
\,,} \\
&{\tiny
\left[\!\begin{array}{r|rrrr}
-1 &  0 &  0 & -1 &  0 \\\hline
-1 & -1 &  0 &  0 &  0 \\
-1 &  0 & -1 &  0 &  0 \\
-1 &  0 &  0 & -1 &  0 \\
-1 &  0 &  0 &  0 & -1
\end{array}\!\right]
\,,\,
\left[\!\begin{array}{r|rrrr}
-1 &  0 &  0 &  0 & -1 \\\hline
-1 & -1 &  0 &  0 &  0 \\
-1 &  0 & -1 &  0 &  0 \\
-1 &  0 &  0 & -1 &  0 \\
-1 &  0 &  0 &  0 & -1
\end{array}\!\right]\,,
}
\end{align*}
then the absolute value of the determinant of $r+M$ equals~$3/2$.
Thus, if indeed $\Delta_5(r+S) \leq 1$, then these matrices cannot be submatrices of~$S$.
In particular, this implies that
\[
M' =
{\tiny
\left[\!\begin{array}{r|rrrr}
 0 &  1 &  1 &  1 &  1 \\\hline
-1 & -1 &  0 &  0 &  0 \\
-1 &  0 & -1 &  0 &  0 \\
-1 &  0 &  0 & -1 &  0 \\
-1 &  0 &  0 &  0 & -1
\end{array}\!\right]}
\]
must be a submatrix of~$S$.
However, the determinant of $r+M'$ equals~$-2$, in contradiction to $r+S$ being totally $1$-submodular.

{\bfseries (iii)}: Assume that there is a Sauer Matrix~$S$ of type $(2,3)$ and a vector $r \in [0,1)^5$ that is feasible for~$S$.
Observe that~$S$ contains feasible Sauer Matrices of types $(1,3)$ in its rows indexed by~$\{1,3,4,5\}$ and by~$\{2,3,4,5\}$.
By \Cref{prop:Sauer-size-4}~(ii) this means that necessarily we have $r = (0,0,\frac12,\frac12,\frac12)^\intercal$, and we can now argue similarly as we did in part~(ii).

First of all, as $r_1 = r_2 = 0$, we may multiply the first or second row of~$S$ with $-1$ if needed, and can assume that the vectors $(-1,0,-1,-1,-1)^\intercal$ and $(0,-1,0,0,0)^\intercal$ are columns of~$S$.
We distinguish cases based on the signs of the entries in the first or second row of the columns $a = (\pm 1,0,-1,0,0)^\intercal$, $b = (\pm1,0,0,-1,0)^\intercal$, $c = (\pm1,0,0,0,-1)^\intercal$, and $a' = (0,\pm 1,-1,0,0)^\intercal$, $b' = (0,\pm1,0,-1,0)^\intercal$, $c' = (0,\pm1,0,0,-1)^\intercal$ of~$S$.

\smallskip
\noindent\emph{Case 1:} $a_1 = b_1 = c_1 = 1$ or $a'_2 = b'_2 = c'_2 = -1$.

Here, one of the matrices
\[
C_1 =
{\tiny
\left[\!\begin{array}{rr|rrr}
 0 &  0 &  1 &  1 &  1 \\
 0 & -1 &  0 &  0 &  0 \\\hline
-1 &  0 & -1 &  0 &  0 \\
-1 &  0 &  0 & -1 &  0 \\
-1 &  0 &  0 &  0 & -1
\end{array}\!\right]}
\quad \textrm{ or } \quad
C_2 =
{\tiny
\left[\!\begin{array}{rr|rrr}
 0 & -1 &  0 &  0 &  0 \\
 0 &  0 & -1 & -1 & -1 \\\hline
 0 & -1 & -1 &  0 &  0 \\
 0 & -1 &  0 & -1 &  0 \\
 0 & -1 &  0 &  0 & -1
\end{array}\!\right]}
\]
must be a submatrix of~$S$, but the absolute value of the determinant of both $r+C_1$ and $r+C_2$ equals $3/2$.

\smallskip
\noindent\emph{Case 2:} Two of the entries $a_1,b_1,c_1$ equal $-1$ or two of the entries $a'_2,b'_2,c'_2$ equal~$1$.

Without loss of generality, we may permute the last three rows of~$S$, and assume that either $a_1 = b_1 = -1$ or $a'_2 = b'_2 = 1$.
Now, one of the matrices
\[
C_3 =
{\tiny
\left[\!\begin{array}{rr|rrr}
-1 &  0 & -1 & -1 &  0 \\
 0 & -1 &  0 &  0 &  0 \\\hline
-1 &  0 & -1 &  0 &  0 \\
-1 &  0 &  0 & -1 &  0 \\
-1 &  0 &  0 &  0 & -1
\end{array}\!\right]}
\quad \textrm{ or } \quad
C_4 =
{\tiny
\left[\!\begin{array}{rr|rrr}
-1 &  0 &  0 &  0 &  0 \\
 0 & -1 &  1 &  1 &  0 \\\hline
-1 &  0 & -1 &  0 &  0 \\
-1 &  0 &  0 & -1 &  0 \\
-1 &  0 &  0 &  0 & -1
\end{array}\!\right]}
\]
must be a submatrix of~$S$, but again the absolute value of the determinant of both $r+C_3$ and $r+C_4$ equals $3/2$.

\smallskip
\noindent\emph{Case 3:} Up to permuting the last three rows of~$S$ we have
$\tiny
\left[\!\begin{array}{*{3}r}
 a_1 &  b_1 &  c_1 \\
 a'_2 & b'_2 & c'_2
\end{array}\!\right]
=
\left[\!\begin{array}{*{3}r}
-1 &  1 &  1 \\
 1 & -1 & -1
\end{array}\!\right]
$.

With this assumption, one of the matrices
\begin{align*}
&{\tiny
\left[\!\begin{array}{rr|rrr}
-1 &  0 &  0 &  1 &  1 \\
-1 & -1 &  0 &  0 &  0 \\\hline
-1 &  0 & -1 &  0 &  0 \\
-1 &  0 &  0 & -1 &  0 \\
-1 &  0 &  0 &  0 & -1
\end{array}\!\right]}
\ ,\ 
{\tiny
\left[\!\begin{array}{rr|rrr}
-1 & -1 &  0 &  0 &  0 \\
 1 &  0 &  1 &  0 &  0 \\\hline
-1 & -1 & -1 &  0 &  0 \\
-1 &  0 &  0 & -1 &  0 \\
-1 &  0 &  0 &  0 & -1
\end{array}\!\right]}
\ ,\ \\
&{\tiny
\left[\!\begin{array}{rr|rrr}
 1 &  0 & -1 &  1 &  1 \\
-1 & -1 &  0 &  0 &  0 \\\hline
-1 &  0 & -1 &  0 &  0 \\
-1 &  0 &  0 & -1 &  0 \\
-1 &  0 &  0 &  0 & -1
\end{array}\!\right]}
\ ,\ 
{\tiny
\left[\!\begin{array}{rr|rrr}
 1 & -1 &  0 &  0 &  0 \\
 1 &  0 &  0 & -1 & -1 \\\hline
-1 & -1 & -1 &  0 &  0 \\
-1 & -1 &  0 & -1 &  0 \\
-1 & -1 &  0 &  0 & -1
\end{array}\!\right]}
\end{align*}
must be a submatrix of~$S$, because one of the four vectors $(\pm1,\pm1,-1,-1,-1)^\intercal$ must be a column of~$S$.
As before, if~$F$ denotes any of these four matrices, then the absolute value of the determinant of~$r+F$ equals~$3/2$.

\smallskip
\noindent\emph{Case 4:} Up to permuting the last three rows of~$S$ we have
$\tiny
\begin{bmatrix}
 a_1 &  b_1 &  c_1 \\
a'_2 & b'_2 & c'_2
\end{bmatrix}
=
\left[\!\begin{array}{rrr}
-1 &  1 &  1 \\
-1 & -1 &  1
\end{array}\!\right]
$.

In this case, one of the matrices
\[
C_7 =
{\tiny
\left[\!\begin{array}{rr|rrr}
-1 &  0 &  1 &  0 &  0 \\
 0 & -1 &  0 & -1 &  0 \\\hline
 0 & -1 &  0 &  0 &  0 \\
 0 &  0 & -1 & -1 &  0 \\
 0 &  0 &  0 &  0 & -1
\end{array}\!\right]}
\quad \textrm{ or } \quad
C_8 =
{\tiny
\left[\!\begin{array}{rr|rrr}
 1 & -1 &  0 &  0 &  0 \\
 0 &  0 & -1 & -1 &  0 \\\hline
 0 & -1 & -1 &  0 &  0 \\
 0 &  0 &  0 & -1 &  0 \\
 0 &  0 &  0 &  0 & -1
\end{array}\!\right]}
\]
must be a submatrix of~$S$, because one of the vectors $(\pm1,0,0,0,0)^\intercal$ must be a column of~$S$.
As before, the absolute value of the determinant of both $r+C_7$ and $r+C_8$ equals~$3/2$.

In conclusion, in all cases we found a $5 \times 5$ minor of $r+S$ whose absolute value is greater than~$1$, and thus no feasible Sauer Matrix of type~$(2,3)$ can exist.
\end{proof}

With these preparations we can now exclude the existence of any Sauer Matrix of size~$5$ that is feasible for translations.

\begin{proof}[Proof of \Cref{prop:sauer-matrix-infeasibility-size-5}]
Just as in the proof of \Cref{prop:Sauer-size-4}~(iv), whenever there are at least three rows in a Sauer Matrix that contain an entry equal to~$1$, then it is infeasible for translations.

Thus, we may assume that~$S$ is a Sauer Matrix whose type is either $(0,5)$, $(1,4)$, or $(2,3)$.
We have just proven in \Cref{prop:negative-Sauer-size-5} however, that all such Sauer Matrices are infeasible for translations.
\end{proof}

\section{Computational Experiments and the Proof of Theorem~\ref{thm:counterexamples-Delta-4}}
\label{sect:computations}

In this part, we describe a computational approach to determine so far unknown values of $\h(\Delta,m)$ for small parameters $m,\Delta \in \Z_{>0}$.
The results of our computations led us to identify a family of counterexamples to \Cref{conj:lee-exact-value-hDm} that lie behind the lower bounds in \Cref{thm:counterexamples-Delta-4}.
Our approach is based on the \emph{sandwich factory classification scheme} described and utilized by Averkov, Borger \& Soprunov~\cite[Section 7.3]{averkovborgersoprunov2021classification}.

As we have done implicitly already in previous sections, we now explicitly work with sets of integer points in~$\Z^m$, rather than with $\Delta$-modular integer matrices with~$m$ rows and full rank.
To this end, for a point set $S \subseteq \Z^m$, we write
\[
\Delta(S) := \max\left\{ \card{\det(S')} : S' \subseteq S , \card{S'} = m \right\}
\]
for the maximum absolute value of the determinant of a matrix whose columns constitute an $m$-element subset of~$S$.
If $P \subseteq \R^m$ is a \emph{lattice polytope}, meaning that all its vertices belong to~$\Z^m$, then we write $\Delta(P) := \Delta(P \cap \Z^m)$.
Since the maximum determinant is attained by an $m$-element subset of the vertices of~$P$, we also have $\Delta(P) = \Delta(\{x \in \Z^m : x \textrm{ a vertex of } P\})$.
For the same reason, we have $\Delta(\conv\{S\}) = \Delta(S)$, for every full-dimensional set $S \subseteq \Z^m$.
Since the value of $\h(\Delta,m)$ is attained by a matrix~$A$ whose columns come in opposite pairs $A_i, -A_i$, we restrict our attention to \emph{$o$-symmetric} lattice polytopes $P \subseteq \R^m$, that is, we require $P = -P$ to hold.
In this language the generalized Heller constant now expresses as
\begin{align}
	\h(\Delta,m) = \max\bigl\{ \card{P \cap \Z^m} : P \subseteq \R^m &\textrm{ an }o\textrm{-symmetric lattice polytope} \bigr.  \nonumber\\
	\bigl. & \text{ with } \Delta(P) = \Delta \bigr\}.\label{eqn:hDm-via-polytopes}
\end{align}
\noindent Therefore, in order to computationally determine the value $\h(\Delta,m)$ for a pair $(\Delta,m)$ of parameters, we may want to solve any of the following classification problems.

\begin{prob}
\label{prob:classification-full}
Given $m,\Delta \in \Z_{>0}$, classify up to unimodular equivalence all $o$-symmetric lattice polytopes $P \subseteq \R^m$ with $\Delta(P) = \Delta$.
\end{prob}

The second classification problem is a variant of the first and is computationally less expensive.

\begin{prob}
\label{prob:classification-maximal}
Given $m,\Delta \in \Z_{>0}$, classify up to unimodular equivalence all $o$-symmetric lattice polytopes $Q \subseteq \R^m$ with $\Delta(Q) = \Delta$ and with the maximal number of integer points under these constraints.
\end{prob}

\subsection{Classification by Sandwich Factory Approach}

As hinted above, we tackle these problems with the sandwich factory classification scheme of Averkov, Borger \& Soprunov~\cite[Section 7.3]{averkovborgersoprunov2021classification}.
This is a quite general and versatile approach that can be applied to various enumeration problems for lattice polytopes.

The basic idea is to use a so-called \emph{sandwich} $(A,B)$ with the inner part~$A$ and the outer part $B$, that is, a pair of lattice polytopes satisfying the inclusion $A \subseteq B$.
Such a sandwich represents the family of all lattice polytopes $P$ that are unimodularly equivalent to a lattice polytope $P'$ satisfying $A \subseteq P' \subseteq B$.
If the latter condition holds, one says that $P$ occurs in the sandwich $(A,B)$. 
If a family $\cF$ of polytopes needs to be enumerated up to unimodular equivalence, and it is possible to find a finite list of sandwiches with the property that every polytope $P \in \cF$ occurs in one of the sandwiches of the finite list, then the enumeration is carried out by iteratively refining sandwiches with $A \varsubsetneq B$, for which the discrepancy between $A$ and $B$ is large and replacing each such sandwich $(A,B)$ with finitely many sandwiches that have a smaller discrepancy between the inner and the outer part.
For the quantification of the discrepancy between~$A$ and~$B$ one can employ different functions.

For our purposes it is natural to use the \emph{lattice point gap} $\card{B \cap \Z^m} - \card{A \cap \Z^m}$.
A natural approach to replace $(A,B)$ by sandwiches with a smaller lattice point gap is to pick a vertex $v$ of $B$ that is not contained in $A$ and modify $(A,B)$ to two sandwiches: one with the inner part containing $v$ and one with the outer part not containing $v$.
The iterative procedure continues until all sandwiches in the list have lattice point gap equal to zero.

There are two important aspects that allow to optimize the running time.
Two sandwiches $(A,B)$ and $(A',B')$ are called \emph{equivalent} if there is a unimodular transformation that simultaneously brings $A$ to $A'$ and $B$ to $B'$.
Thus, for the enumeration of polytopes with a property $\sfP$ that is invariant up to unimodular equivalence, it is sufficient to keep sandwiches up to this notion of equivalence.
Our enumeration task concerns the property $\sfP$ of a lattice polytope~$A$, describing that $\Delta(A) = \Delta$.
As described in~\cite[Lem.~7.9]{averkovborgersoprunov2021classification}, equivalence of two given sandwiches can be expressed as unimodular equivalence of suitable higher-dimensional lattice polytopes associated with the two sandwiches.
The second aspect that allows to optimize the running time is \emph{monotonicity}.
If~$\sfP$ is the conjunction $\sfP = \sfP_1 \wedge \sfP_2$, where $\sfP_1$ is a downward closed property, while~$\sfP_2$ is an upward closed property, we can prune those sandwiches $(A,B)$ that are generated for which $A$ does not satisfy $\sfP_1$, or $B$ does not satisfy $\sfP_2$.

A further tool for an efficient implementation of these ideas is the
\emph{reduction of a polytope $B$} relative to some polytope $A \subseteq B$.
This simply means, that before adding a possible new sandwich $(A,B)$ during the iteration, we neglect all integer points~$v \in B$ such that $\Delta(A \cup \{v\}) > \Delta(A)$.
More precisely, the \emph{reduced sandwich} $(A,B')$ of $(A,B)$ is defined by $B' = \conv\{ v \in B \cap \Z^m : \Delta(A \cup \{v\}) = \Delta(A) \}$.

With these details of the implementation in mind, we can now describe the procedure to solve \Cref{prob:classification-full} as done in \Cref{algo:sandwich-full}.
Regarding the classification of all $o$-symmetric lattice polytopes $P \subseteq \R^m$ with $\Delta(P) = \Delta$ and $\h(\Delta,m) = \card{P \cap \Z^m}$ in \Cref{prob:classification-maximal}, we need to make the following adjustments to \Cref{algo:sandwich-full}:

\begin{algorithm*}[hb]
\caption{Sandwich Factory Algorithm that solves \Cref{prob:classification-full}}
\label{algo:sandwich-full}
\begin{algorithmic}[0]
\REQUIRE{A dimension $m \in \Z_{>0}$ and a value $\Delta \in \Z_{>0}$.}
\ENSURE{A list of all full-dimensional $o$-symmetric lattice polytopes $P \subseteq \R^m$ with $\Delta(P) = \Delta$, up to unimodular equivalence.}
\STATE{\medskip\textbf{Step 1:} Initialization}
\STATE{
\begin{itemize}
 \item enumerate $o$-symmetric lattice crosspolytopes $A \subseteq \R^m$ with $\Delta(A) = \Delta$
 \item for each $A = \conv\{\pm v_1,\ldots,\pm v_m\}$ as above choose
 \[
 Q := \left\{ \sum_{i=1}^m \alpha_i v_i : -1 \leq \alpha_i \leq 1, \forall i \in [m]\right\}
 \]
 as the lattice parallelepiped spanned by the vertices of~$A$
 \item $B := $ reduction of $Q$ relative to $A$
 \item initialize the sandwich factory $\cF$ with all pairs $(A,B)$ obtained above 
\end{itemize}}
\STATE{\medskip\textbf{Step 2:} Iterative reduction of maximal lattice point gap}
\WHILE{there are sandwiches with a positive lattice point gap}
 \STATE{
 \setlist[itemize]{leftmargin=20pt}
 \begin{itemize}
  \item $(A,B) :=$ a sandwich with maximal lattice point gap
  \item $v :=$ a vertex of $B$ that is not contained in $A$
  \item $A' := \conv\{A \cup \{\pm v\}\}$
  \item $B' := $ reduction of $B$ relative to $A'$
  \item $B'' := \conv\{(B \cap \Z^m) \setminus \{\pm v\}\}$
  \item add $(A',B')$ to $\cF$, if $\cF$ does not already contain a sandwich that is equivalent to $(A',B')$
  \item add $(A,B'')$ to $\cF$, if $\cF$ does not already contain a sandwich that is equivalent to $(A,B'')$ 
 \end{itemize}
 }
\ENDWHILE
\STATE{\medskip\textbf{Step 3:} Return the results}
\STATE{
\begin{itemize}
 \item all sandwiches $(A,B)$ in $\cF$ have now the form $A = B$
 \item return the set of all~$A$ such that $(A,B) \in \cF$
\end{itemize}
}
\end{algorithmic}
\end{algorithm*}

\begin{enumerate}[label=(\roman*)]
 \item We maintain a value $\mathrm{cmax}$ that we initialize with the valid lower bound $m^2 + m + 1 + 2m(\Delta-1)$ on $\h(\Delta,m)$ (see~\eqref{eqn:lee-et-al-bound}).
 \item We never add a sandwich $(A,B)$ during the algorithm with $\card{B \cap \Z^m} < \mathrm{cmax}$.
 \item If we add a sandwich $(A,B)$ to $\cF$ with $\card{A \cap \Z^m} > \mathrm{cmax}$, then we update $\mathrm{cmax}$ to $\card{A \cap \Z^m}$.
\end{enumerate}

\begin{remark}\
\begin{enumerate}[label=(\roman*)]
  \item The initialization of the sandwich factory in Step 1 of \Cref{algo:sandwich-full} can be done by generating all Hermite normal forms of integer matrices $M \in \Z^{m \times m}$ with $\Delta(M) = \Delta$ (cf.~Schrijver~\cite[Sect.~4.1]{schrijver1986theory}).
  
  \item One can interpret our sandwich factory based approach to \Cref{prob:classification-maximal} as a branch-and-bound procedure for the maximization of $|P \cap \Z^m|$ subject to $\Delta(P) \le \Delta$.
  In fact, each sandwich $(A,B)$ corresponds to a node of a branch-and-bound tree.
Replacing a sandwich with two new sandwiches is branching, and the removal of sandwiches in adjustment~(ii) is pruning of a node from the branch-and-bound tree. 

\end{enumerate}
\end{remark}

\subsection{Computational Results}

We have implemented the previously described algorithms in \texttt{sagemath}~\cite{sage}, based on the existing implementation of the sandwich factory used in~\cite{averkovborgersoprunov2021classification} by Christopher Borger\footnote{see \url{https://github.com/christopherborger/mixed_volume_classification}}.
The source code as well as data files containing the results of our computations are available at \url{https://github.com/mschymura/delta-classification}.

The computational results regarding the constant $\h(\Delta,m)$ and the number of equivalence classes of $o$-symmetric lattice polytopes for a given~$\Delta$ are gathered in Tables~\ref{table:h-delta-m},\ref{table:number-delta-polytopes} and~\ref{table:number-extremal-delta-polytopes}.

\begin{table}[ht]
\centering
\begin{tabular}{|c||c|c|c|c|c|c|c|c|c|c|c|}
\hline
$m \ \backslash\ \Delta$ & $1$ & $2$ & $3$ & $4$ & $5$ & $6$ & $7$ & $8$ & $9$ & $10$ & $11$ \\ \hline\hline
$2$                   &  $7$   &  $11$   &  $15$   &  $19$  &  $23$   &  $27$   &  $31$   &  $35$   &  $39$   &  $43$  & $47$   \\ \hline
$3$                   &  $13$   &  $19$   &  $\mathbf{25}$   &  $\mathbf{33}^*$   &  $\mathbf{37}$   &  $\mathbf{43}$   &  $\mathbf{49}$   &  $\mathbf{55}$   &  $\mathbf{61}$   &  $\mathbf{67}$  & $\mathbf{73}$ \\ \hline
$4$                   &  $21$   &  $29$   &     &  $\mathbf{\geq 49^*}$   &     &     &     &  $\mathbf{\geq 81^*}$   &     &   &   \\ \hline
$5$                   &  $31$   &  $41$   &     &  $\mathbf{\geq 67^*}$   &     &     &     &   $\mathbf{\geq 109^*}$  &     &   &   \\ \hline
\end{tabular}
\caption{The values of $\h(\Delta,m)$ for small numbers $m,\Delta$. The values in bold have not been known before. Values with an asterisk $^*$ indicate that $\h(\Delta,m)$ is larger than it was conjectured by Lee et al. (see \Cref{conj:lee-exact-value-hDm}).}
\label{table:h-delta-m}
\end{table}

\Cref{table:h-delta-m} determines the previously unknown exact values of $\h(\Delta,m)$, for $m=3$ and $3 \leq \Delta \leq 11$.
It also reveals a counterexample to \Cref{conj:lee-exact-value-hDm} for the case $(\Delta,m) = (4,3)$, whose structure we used to construct counterexamples for every $(\Delta,m) \in \{ (4,4) , (4,5) , (8,4) , (8,5) \}$ as well.
The construction is discussed further below in the next section.

\Cref{table:number-delta-polytopes} reports on the classification \Cref{prob:classification-full} and lists the number of equivalence classes of $o$-symmetric lattice polytopes $P \subseteq \R^m$ with $\Delta(P) = \Delta$, and for the parameters~$\Delta,m$ for which our algorithm stopped within at most~$3$ days of running time.

\begin{table}[ht]
\centering
\begin{tabular}{|c||c|c|c|c|c|c|c|c|c|c|c|}
\hline
$m \ \backslash\ \Delta$ & $1$ & $2$ & $3$ & $4$ & $5$ & $6$ & $7$ & $8$ & $9$ & $10$ & $11$ \\ \hline\hline
$2$                   &  $2$   &  $4$   &  $7$   &  $15$  &  $16$   &  $35$   &  $29$   &  $72$   &  $77$   &  $126$  & $99$  \\ \hline
$3$                   &  $5$   &  $32$   &  $102$   &  $554$   &  $996$  &  $5937$   &  $8029$   &    &     &  &    \\ \hline
$4$                   &  $17$   &  $448$  &     &     &     &     &     &     &     &   &   \\ \hline
\end{tabular}
\caption{The number of equivalence classes of $o$-symmetric lattice polytopes $P \subseteq \R^m$ for given~$\Delta(P) = \Delta$.}
\label{table:number-delta-polytopes}
\end{table}

For the pairs $(\Delta,3)$ with $8 \leq \Delta \leq 11$, we used the modification of \Cref{algo:sandwich-full} that solves \Cref{prob:classification-maximal}.
The corresponding number of equivalence classes of \emph{extremizers} of~$\h(\Delta,m)$ are given in \Cref{table:number-extremal-delta-polytopes}.
It is interesting to observe that, starting from dimension $m=3$, non-uniqueness of an extremizer of~$\h(\Delta,m)$ is the norm.

\begin{table}[ht]
\centering
\begin{tabular}{|c||c|c|c|c|c|c|c|c|c|c|c|}
\hline
$m \ \backslash\ \Delta$ & $1$ & $2$ & $3$ & $4$ & $5$ & $6$ & $7$ & $8$ & $9$ & $10$ & $11$ \\ \hline\hline
$2$                   &  $1$   &  $1$   &  $1$   &  $2$  &  $1$   &  $1$   &  $1$   &  $1$   &  $1$   &  $1$  & $1$  \\ \hline
$3$                   &  $1$   &  $3$   &  $3$   &  $1$   &  $5$  &  $7$   &  $8$   &  $12$  &  $13$   & $14$ &  $16$   \\ \hline
$4$                   &  $1$   &  $2$  &     &     &     &     &     &     &     &   &   \\ \hline
\end{tabular}
\caption{The number of equivalence classes of $o$-symmetric lattice polytopes $P \subseteq \R^m$ for given~$\Delta(P) = \Delta$, and which satisfy $\h(\Delta,m) = \card{P \cap \Z^m}$.}
\label{table:number-extremal-delta-polytopes}
\end{table}

%
%
%

\subsection{Construction of Counterexamples and {\Cref{thm:counterexamples-Delta-4}}}

For a finite set $S \subseteq \R^m$, we denote by
\[
\cD(S) := S - S = \left\{ a - b : a,b \in S \right\}
\]
its \emph{difference set}, and by
\[
\pyr(S) := \left( S \times \{0\} \right) \cup \{ e_{m+1} \} \subseteq \R^{m+1}
\]
the \emph{pyramid} over~$S$ of height one.
It turns out that a combination of these two operations behaves very well with respect to the maximal absolute value of the determinant of $m$-element subsets.

\begin{lemma}
\label{lem:delta-of-pyramids}
For every finite non-empty set $S \subseteq \R^m$, we have
\[
\card{\cD(\pyr(S))} = \card{\cD(S)} + 2 \card{S} \quad \textrm{and} \quad \Delta(\cD(\pyr(S))) = \Delta(\cD(S)).
\]
\end{lemma}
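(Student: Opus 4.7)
My plan is to establish the cardinality identity and the two determinant inequalities separately.

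\textbf{Cardinality.} I would partition $\cD(\pyr(S))$ according to the last coordinate of its elements. Writing $v = w_1 - w_2$ with $w_1, w_2 \in \pyr(S) = (S \times \{0\}) \cup \{e_{m+1}\}$, the last coordinate is zero exactly when either both $w_i$ lie in $S \times \{0\}$ or both equal $e_{m+1}$, and in both situations we recover exactly the set $\cD(S) \times \{0\}$. The last coordinate equals $-1$ precisely for differences $(s,0) - e_{m+1}$, giving $S \times \{-1\}$, and equals $+1$ precisely for differences $e_{m+1} - (s,0)$, giving $(-S) \times \{+1\}$. These three disjoint sets have cardinalities $|\cD(S)|$, $|S|$, and $|S|$, yielding $|\cD(\pyr(S))| = |\cD(S)| + 2|S|$.

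\textbf{Upper bound on $\Delta$.} I would take an arbitrary $(m+1) \times (m+1)$ matrix $M$ whose columns are chosen from $\cD(\pyr(S))$ and show $|\det(M)| \le \Delta(\cD(S))$. If two columns $u_i = (v_i, \varepsilon_i)$ both have $\varepsilon_i \in \{\pm 1\}$, the determinant-preserving operation $u_2 \mapsto u_2 - (\varepsilon_2/\varepsilon_1) u_1$ yields a new column whose last entry is zero and whose first $m$ entries lie in $\cD(S)$. This requires a short case check over the four sign patterns of $(\varepsilon_1, \varepsilon_2)$: for example, if $\varepsilon_1 = -1$, $\varepsilon_2 = +1$, then $u_1 = (s_1, -1)$ and $u_2 = (-s_2, 1)$ for some $s_1, s_2 \in S$, and $u_2 + u_1 = (s_1 - s_2, 0) \in \cD(S) \times \{0\}$; the other three cases are analogous. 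Iterating this reduction I can assume that at most one column of $M$ has a nonzero last coordinate. If none does, then $\det(M) = 0$; otherwise, expansion along the last row yields $|\det(M)| = |\det(d_1, \ldots, d_m)|$ with $d_1, \ldots, d_m \in \cD(S)$, which is at most $\Delta(\cD(S))$.

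\textbf{Lower bound on $\Delta$.} If $\Delta(\cD(S)) = 0$ the inequality is trivial. Otherwise, I pick $d_1, \ldots, d_m \in \cD(S)$ realizing the maximum and any $s \in S$ (using that $S$ is non-empty). Since $(-s, 1) = e_{m+1} - (s, 0) \in \cD(\pyr(S))$, expansion along the last row of the matrix with columns $(d_1, 0), \ldots, (d_m, 0), (-s, 1)$ shows its determinant equals $\pm \det(d_1, \ldots, d_m)$, which gives the reverse inequality.

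The main technical step is the sign bookkeeping inside the column-operation argument for the upper bound, but once the four sign patterns $(\varepsilon_1, \varepsilon_2) \in \{\pm 1\}^2$ are tabulated the reduction becomes routine, and the rest of the proof is just Laplace expansion along the last row.
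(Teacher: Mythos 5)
Your proof is correct and follows essentially the same route as the paper: the same decomposition of $\cD(\pyr(S))$ by last coordinate for the cardinality count, the same lifting of an extremal $m$-subset by appending $(-s,1)$ for the lower bound, and the same column-reduction plus Laplace expansion along the last row for the upper bound. The only cosmetic difference is that you handle the mixed signs $\pm 1$ in the last coordinate by an explicit four-case column operation, whereas the paper normalizes all such columns into $-S \times \{1\}$ without loss of generality before subtracting.
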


\begin{proof}
The difference set of the pyramid over~$S$ is given by
\[
\cD(\pyr(S)) = \left( (S - S) \times \{0\} \right) \cup \left( -S \times \{1\} \right) \cup \left( S \times \{-1\} \right).
\]
Since $S$ is finite, this immediately yields the cardinality count.

So, let's prove the statement on the largest $m \times m$ minors in $\cD(\pyr(S))$.
First of all, we have $\Delta(\cD(\pyr(S))) \geq \Delta(\cD(S))$, because we can take an $m$-element subset $S' \subseteq \cD(S)$ with $\card{\det(S')} = \Delta(\cD(S))$ and lift this to the set $S'' := (S' \times \{0\}) \cup \{ (-s,1) \} \subseteq \cD(\pyr(S))$, for some $s \in S$.
Clearly, $\card{\det(S'')} = \card{\det(S')} = \Delta(\cD(S))$.

Conversely, let $S' = \{s_0',s_1',\ldots,s_m'\} \subseteq \cD(\pyr(S))$ be a subset of size $m+1$, and without loss of generality, let $s_0',s_1',\ldots,s_\ell' \in \left( (S-S) \times \{0\} \right)$ be with $s_i' = (s_{i1}-s_{i2},0)$, for suitable $s_{i1},s_{i2} \in S$, and $s_{\ell+1}',\ldots,s_m' \in \left( -S \times \{1\} \right)$ be with $s_j' = (-s_j,1)$, for suitable $s_j \in S$.
With this notation, we obtain
\begin{align*}
\card{\det(S')} &= \card{\det\left(\tbinom{s_{01}-s_{02}}{0},\ldots,\tbinom{s_{\ell1}-s_{\ell2}}{0},\tbinom{-s_{\ell+1}}{1},\ldots,\tbinom{-s_{m}}{1}\right)}\\
&= \card{\det\left(\tbinom{s_{01}-s_{02}}{0},\ldots,\tbinom{s_{\ell1}-s_{\ell2}}{0},\tbinom{s_m-s_{\ell+1}}{0},\ldots,\tbinom{s_m-s_{m-1}}{0},\tbinom{-s_{m}}{1}\right)}\\
&= \card{\det\left(s_{01}-s_{02},\ldots,s_{\ell0}-s_{\ell1},s_m-s_{\ell+1},\ldots,s_m-s_{m-1}\right)}\\
&\leq \Delta(\cD(S)).\qedhere
\end{align*}
\end{proof}

Based on this lemma we can now construct series of examples that exceed the conjectured value of~$\h(4,m),\h(8,m)$, and $\h(16,m)$ in Conjecture~\ref{conj:lee-exact-value-hDm} by an additive term that is linear in~$m$, and for~$m$ large enough.
Our construction is based on the unique (up to unimodular equivalence) set attaining the value $\h(4,3) = 33$, resulting from our enumeration approach described earlier (see \Cref{table:h-delta-m,table:number-extremal-delta-polytopes}).
This set can be written as
\[
\cD(\pyr(H_2)) = \left( 2 \cdot H_2 \times \{0\} \right) \cup \left( H_2 \times \{-1,1\} \right),
\]
where $H_2 = \cD(\{\zero,e_1,e_2\}) = \left\{ \zero, \pm e_1, \pm e_2, \pm (e_1 - e_2) \right\} \subseteq \Z^2$ is the two-dimensional set attaining the Heller constant $\h(1,2) = 7$.
Iterating this pyramid construction and using the $\ell$-dimensional set $H_\ell = \cD(\{\zero,e_1,\ldots,e_\ell\})$ attaining the Heller constant $\h(1,\ell) = \ell^2 + \ell + 1$, we define
\[
C_m^\ell := \pyr^{m-\ell}(H_\ell) = \underbrace{\pyr(\ldots\pyr(\pyr(H_\ell))\ldots)}_{m-\ell \textrm{ times}} \subseteq \Z^m.
\]

\begin{proof}[Proof of \Cref{thm:counterexamples-Delta-4}]
The point sets yielding the claimed lower bounds are of the form~$\cD(C_m^\ell)$, that is, the difference set of~$C_m^\ell$.
In view of \Cref{lem:delta-of-pyramids}, we get
\[
\Delta(\cD(C_m^\ell)) = \Delta(\cD(C_{m-1}^\ell)) = \ldots = \Delta(\cD(H_\ell)) = \Delta(2 \cdot H_\ell) = 2^\ell,
\]
and, using that the pyramid construction adds exactly one point to the set that the pyramid is taken over, we also obtain $\card{C_m^\ell} = m - \ell + \card{H_\ell}$.
Hence, using \Cref{lem:delta-of-pyramids} once more, we get
\begin{align}
\card{\cD(C_m^\ell)} &= \card{\cD(C_{m-1}^\ell)} + 2\card{C_{m-1}^\ell} = \card{\cD(C_{m-2}^\ell)} + 2\card{C_{m-2}^\ell} + 2\card{C_{m-1}^\ell} \nonumber\\
&= \card{\cD(H_\ell)} + 2 \sum_{i=\ell}^{m-1} \card{C_i^\ell} = \card{\cD(H_\ell)} + 2 \sum_{i=\ell}^{m-1} (i - \ell + \card{H_\ell}) \nonumber\\
&= \card{\cD(H_\ell)} + 2 \sum_{j=0}^{m-\ell-1} (\card{H_\ell} + j) \nonumber\\
&= \card{\cD(H_\ell)} + 2(m-\ell)\card{H_\ell} + (m-\ell)(m-\ell-1) \nonumber\\
&= m^2 + \left(2 \card{H_\ell} - 2 \ell - 1 \right) m + \left( \card{\cD(H_\ell)} - 2\ell\card{H_\ell} + \ell(\ell+1) \right).\label{eqn:counterexamples}
\end{align}
The conjectured value in \Cref{conj:lee-exact-value-hDm} is $\h(2^\ell,m) = m^2 + (2^{\ell+1} - 1) m + 1$.
Using $\card{H_\ell} = \ell^2 + \ell + 1$ and~\eqref{eqn:counterexamples}, this means that $\cD(C_m^\ell)$ is a counterexample to \Cref{conj:lee-exact-value-hDm}, for fixed $\ell$ and large enough~$m$, if and only if
\[
2 \card{H_\ell} - 2 \ell - 1 = 2\ell^2 + 1 > 2^{\ell+1} - 1.
\]
This holds exactly for $\ell \in \{2,3,4\}$, and computing $\card{\cD(H_2)} = 19$, $\card{\cD(H_3)} = 55$, and $\card{\cD(H_4)} = 131$, we get by~\eqref{eqn:counterexamples}
\begin{align*}
\h(4,m) &\geq \card{\cD(C_m^2)} =  m^2 + 9m - 3   &&= m^2 + 7m + 1 + 2(m-2), \\
\h(8,m) &\geq \card{\cD(C_m^3)} = m^2 + 19m - 11  &&= m^2 + 15m + 1 + 4(m-3), \\
\h(16,m) &\geq \card{\cD(C_m^4)} = m^2 + 33m - 17 &&= m^2 + 31m + 1 + 2(m-9),
\end{align*}
and the claim follows.
\end{proof}


\section{Open Problems}
\label{sect:open-problems}

The determination of the exact value of $\h(\Delta,m)$ remains the major open problem. Note that the bounds from other sources and the bound we prove here are incomparable when both $m$ and $\Delta$ vary.
In order to understand the limits of our method for upper bounding $\h(\Delta,m)$, it is necessary to determine the exact asymptotic behavior of $\hs(m)$ or $\hsd{\Delta}{m}$.
\Cref{prop:shifted-Heller-dim-2-and-3} suggests that the following may have an affirmative answer:

\begin{question}
It is true that for every $m \in \Z_{>0}$, we have $\hs(m) \leq \h(1,m)
$?
\end{question}

This would imply a bound of order $\h(\Delta,m) \in \cO(m^2) \cdot \Delta$, which is with a view at \Cref{prop:shifted-heller-lower-bound} the best possible we can achieve based on $\hs(m)$.

A relaxed question concerns the refined shifted Heller constant:

\begin{question}
Do we have $\hsd{\Delta}{m} \leq \h(1,m)$, for every $m,\Delta \in \Z_{>0}$?
\end{question}

Again an affirmative answer would imply the bound $\h(\Delta,m) \in \cO(m^2) \cdot \Delta$, but \Cref{prop:shifted-heller-lower-bound} does not rule out the possibility that $\hsd{\Delta}{m}$ actually grows sublinearly with~$m$, for fixed~$\Delta$, as the construction therein uses a translation vector with denominator equal to~$m$.

Relaxing the question once again, we may ask

\begin{question}
Is it true that $\h(\Delta,m) \in \cO(m^2) \cdot \Delta$?
\end{question}

Our computational experiments described in \Cref{sect:computations} suggest that there are more constraints on the maximal size of a $\Delta$-modular integer matrix, when $\Delta$ is a prime (compare also with the improved bound in \Cref{thm:polynomial-linear}~(ii) for odd~$\Delta$).

\begin{question}
Does \Cref{conj:lee-exact-value-hDm} hold for every $m \in \Z_{>0}$, and every \emph{prime} $\Delta \in \Z_{>0}$?
In particular, does it hold for $\h(3,m)$?
\end{question}

Moreover, from the data in \Cref{table:h-delta-m} one may also suspect that for any given $m \in \Z_{>0}$ there are only finitely many $\Delta \in \Z_{>0}$ that possibly violate \Cref{conj:lee-exact-value-hDm}.
For instance, it could very well be that the value $\h(4,3) = 33$ is the only exception from \Cref{conj:lee-exact-value-hDm} in dimension $m=3$.

\begin{question}
Given $m \in \Z_{>0}$, is there always a threshold $\Delta(m) \in \Z_{>0}$ such that $\h(\Delta,m) = m^2+m+1+2m(\Delta-1)$, for every $\Delta \geq \Delta(m)$?
\end{question}

Finally, while investigating the extreme examples attaining $\h(\Delta,m)$, for small values of $m$ and $\Delta$, and which are enumerated in \Cref{table:number-extremal-delta-polytopes}, we found that for each computed pair $(\Delta,m)$ there is at least one extremizer that can be written as the set of integer points in the convex hull of the difference set of some subset of~$\Z^m$.
We wonder whether this is a general phenomenon:

\begin{question}
Is there always an extremizer for $\h(\Delta,m)$ that can be written as the set of integer points in the convex hull of the difference set of a subset of~$\Z^m$?
\end{question}

\subsection*{Acknowledgments}
We thank Rudi Pendavingh for pointing us to the paper of Geelen et al.~\cite{geelennelsonwalsh2021excludingaline}.

\bibliographystyle{amsplain}
\bibliography{mybib}

\end{document}